\newtheorem{thm}{Theorem}[section]
\newtheorem{prop}[thm]{Proposition}
\newtheorem{lem}[thm]{Lemma}
\theoremstyle{definition}
\newtheorem{defn}[thm]{Definition}
\newtheorem{example}[thm]{Example}
\newtheorem{assumptions}[thm]{Assumptions}
\theoremstyle{remark}
\newtheorem{rem}[thm]{Remark}
\numberwithin{equation}{section}
\numberwithin{thm}{section}
\newcommand\mynobreakpar{\vspace{0.02in}\par\nobreak\@afterheading}  
	\def\MR#1{}
\DeclareMathOperator{\dist}{dist}
\DeclareMathOperator{\End}{End}
\DeclareMathOperator{\rank}{rank}
\DeclareMathOperator{\Ht}{ht}
\DeclareMathOperator{\chr}{char}
\DeclareMathOperator{\Mat}{Mat}
\DeclareMathOperator{\Gal}{Gal}
\DeclareMathOperator{\diag}{diag}
\DeclareMathOperator{\Nr}{Nr}
\DeclareMathOperator{\Cent}{Cent}
\DeclareMathOperator{\lcm}{lcm}
\newcommand{\modr}{\ \mathrm{mod}_r\ }
\newcommand{\fl}{\mathfrak{l}}
\newcommand{\fn}{\mathfrak{n}}
\newcommand{\fp}{\mathfrak{p}}
\newcommand{\fq}{\mathfrak{q}}
\newcommand{\cC}{\mathcal{C}}
\newcommand{\cI}{\mathcal{I}}
\newcommand{\cM}{\mathcal{M}}
\newcommand{\cP}{\mathcal{P}}
\renewcommand{\cR}{\mathcal{R}}
\newcommand{\cS}{\mathcal{S}}
\newcommand{\F}{\mathbb{F}}
\newcommand{\Z}{\mathbb{Z}}
\newcommand{\tF}{\widetilde{F}}
\newcommand{\tinf}{\widetilde{\infty}}
\newcommand{\To}{\longrightarrow}
\newcommand{\Fi}{F_\infty}
\newcommand{\oF}{\overline{\F}}
\newcommand{\oK}{\overline{K}}
\newcommand{\la}{\lambda}
\newcommand{\twist}[1]{#1\!\left\{\tau\right\}}
\newcommand{\twistt}[2]{#1\!\left\{#2\right\}}
\newcommand{\atwist}[2]{#1\langle#2\rangle}
\newcommand{\abs}[1]{\left|#1\right|}
\title{Rank-metric codes from Drinfeld modules}
\author{Giacomo Micheli}
\address{Department of Mathematics \& Statistics, The University of South Florida, Florida, United States of America}
\email{gmicheli@usf.edu}
\author{Mihran Papikian}
\address{Department of Mathematics, Pennsylvania State University, University Park, Pennsylvania, United States of America}
\email{papikian@psu.edu}
\thanks{The first author was supported by NSF CAREER grant 2338424. The second author was supported in part by the Simons Foundation, award number MPS-TSM-00008093.}
\subjclass[2020]{11G09, 12K10, 11T71, 16S36}
\keywords{Rank metric codes, Drinfeld modules, twisted polynomial ring}
\begin{document}

\begin{abstract}
We establish a connection between Drinfeld modules and rank-metric codes, focusing  
on the case of semifield codes. 
Our method constructs rank-metric codes from linear subspaces of endomorphisms of a 
Drinfeld module acting on torsion submodules. We show that Sheekey's construction \cite{Sheekey} 
fits naturally into this framework, yielding a short conceptual proof of one of his main results. 
We then give a new construction of infinite families of semifield codes arising from 
Drinfeld modules defined over finite fields.
\end{abstract}


\maketitle
\section{Introduction} 

\subsection{Rank-metric codes and semifields}\label{ss1.1} Error-correcting codes are used when communication is over a channel in which 
errors may occur. This requires a set equipped with a distance function and a subset of allowed codewords; if the number of errors 
is assumed to be small, then a received message is decoded to be the nearest valid codeword. 

The most well-known and widely-used examples are the codes with the Hamming metric: the codewords are taken from an $n$-dimensional vector 
space $\F_q^n$ over a finite field $\F_q$ with $q$ elements (usually $q=2$), and the distance between two vectors in $\F_q^n$ 
is defined as the number of positions in which they differ.  

In rank-metric coding, the codewords are instead taken from the set $\Mat_{m, n}(\F_q)$  of $m\times n$-matrices ($m\leq n$), with 
the distance between two matrices defined as the rank of their difference 
$$
\dist(X, Y) = \rank (X-Y). 
$$
Rank-metric codes have seen renewed interest in recent years due to their applications in network coding, distributed
data storage,  
and cryptography; cf. \cite{SKK}, \cite{bartz2022}, \cite{SheekeySurvey}. 

A rank-metric code is a subset $\cC \subset \Mat_{m, n}(\F_q)$ with a minimum distance defined by 
$$
\dist(\cC)=\min \{\dist(X, Y)\mid X, Y\in \cC, X\neq Y\}. 
$$
Such $\cC$ satisfies the  
 \textit{Singleton-type bound}: if $d=\dist(\cC)$, then 
\begin{equation}\label{eqSingleton}
	\abs{\cC} \leq q^{n(m-d+1)}. 
\end{equation}
A code obtaining the bound \eqref{eqSingleton} is said to be a \textit{Maximum Rank Distance code}, or \textit{MRD code} for short. It is know that 
MRD codes exist for every finite field $\F_q$ and any choice of parameters $d\leq m\leq n$; see  \cite{Delsarte}. An MRD code 
can correct up to $\lfloor (d-1)/2\rfloor$ errors.

We will assume for the rest of this article that MRD codes that we consider are $\F_q$-linear, i.e., $\cC$ 
is an $\F_q$-vector subspace of $\Mat_{m, n}(\F_q)$. 
In the special case when $d=m=n$, an $\F_q$-linear MRD code is equivalent to a 
\textit{semifield}---a finite dimensional $\F_q$-algebra with unity, in which multiplication is not 
necessarily commutative or associative; see \cite[Prop. 1]{Sheekey}.  
To distinguish such MRD codes, 
we will call them \textit{semifield codes}, although this is not a standard terminology. 

To see the connection between semifields and MRD codes, let $\cS$ be a semifield of dimension $n$ over $\F_q$.  
Then the right multiplication by a non-zero element $\alpha$ on $\cS$ defines an 
invertible linear operator $T_\alpha\colon \cS \to \cS$, $\beta\mapsto \beta\alpha$. After fixing a basis of $\cS$ over $\F_q$, we get 
an injection $\cS\to \Mat_n(\F_q)$, whose image is a semifield code. 
For example, this can be applied to $\cS=k$, where 
$$k\colonequals \F_{q^n}$$ is the finite field with $q^n$ elements, considered as an $\F_q$-vector space.  

\begin{example}
A more interesting example of a finite semifield arises from the \textit{twisted polynomial ring} 
$$
R\colonequals \twist{k}=\left\{\sum_{i=0}^n a_i \tau^i \mid n\geq 0, a_i\in k\right\}, 
$$ 
where the addition is the usual addition of polynomials, but 
the multiplication is subject to the following commutation rule $$\tau a=a^q\tau \quad \text{for all} \quad a\in k.$$ It is clear that $R$ is a (noncommutative) ring without zero-divisors. It is also easy to show that $R$ possesses a right (and left) division algorithm. 
Let $P\in R$ be a monic  irreducible element of degree $s$. For any $f\in R$, there are unique $g, e\in R$ such that 
$$
f=gP+e \quad \text{with } \deg(e)<s \text{ or }e=0. 
$$
We denote the residue $e=f\modr P$. With this notation, the set 
$$
\cS=\{f\in R\mid \deg(f)\leq s-1\},  
$$
with the addition from $R$ and multiplication 
$a\circ b = ab\modr P$ 
is a semifield of order $q^{ns}$. The MRD codes that one obtains from such semifields are called \textit{Gabidulin codes}; cf. \cite{Sheekey}.   
\end{example}

Elliptic curves have enjoyed great success in cryptographic applications. 
While Drinfeld modules---despite their many formal similarities to elliptic curves---are not suitable for cryptographic applications due to their semilinear nature \cite{Scanlon}, this same semilinear structure turns out to be ideally suited for constructing rank-metric codes. 
In this paper, we explain how the theory of Drinfeld modules can be used to construct rank-metric codes. 
The Drinfeld module perspective may situate various constructions of such codes, 
e.g. \cite{Sheekey}, \cite{TZ},  \cite{LSS}, \cite{GTLNS}, within a unified algebraic framework, 
opening the door to powerful tools from the theory of Drinfeld modules into coding theory.


\subsection{Drinfeld modules}\label{sIntroDM} 
We denote $A=\F_q[T]$, the polynomial ring in indeterminate $T$ with coefficients in $\F_q$. 
Given a nonzero ideal $\fn\lhd A$, with slight abuse of notation, we denote by the same symbol  
the unique monic generator of $\fn$. The \textit{primes} of $A$ are the maximal ideals of $A$. 
Given a prime $\fp\lhd A$, we denote $\F_\fp=A/\fp$. 

A \textit{Drinfeld module} of rank $r\geq 1$ over $k$ is an $\F_q$-algebra homomorphism 
\[\phi\colon A\to R,\quad  a\mapsto \phi_a, \] 
such that $\phi_T=g_0+g_1\tau+\cdots+g_r\tau^r$ for some $g_0, \dots, g_r\in k$ with $g_r\neq 0$. The \textit{$A$-characteristic of $\phi$}, 
denoted $\chr_A(\phi)$, is the kernel of the homormorphism $A\to k$ defined by $T\mapsto g_0$. The Drinfeld module $\phi$ makes the algebraic 
closure $\bar{k}$ into an $A$-module, denoted ${^\phi}\overline{k}$; see Section \ref{sDM} for the details. The elements of ${^\phi}\overline{k}$ 
annihilated by $a\in A$ form a finite $A$-module denoted $\phi[a]$; if $\chr_A(\phi)\nmid a$, then $\phi[a]\cong (A/aA)^r$. 

Define 
the \textit{endomorphism ring of $\phi$} as 
$$
\End(\phi)=\left\{u\in R\mid u\phi_T=\phi_Tu\right\}. 
$$
$\End(\phi)$ is an $A$-algebra, which is known to be a free $A$-module of rank $\leq r^2$. Every $u\in \End(\phi)$ acts $A$-linearly on $\phi[a]$. 
Choosing a prime $\fp\neq \chr_A(\phi)$, we obtain a homomorphism 
$$
\iota_\fp\colon \End(\phi)\To \End_A(\phi[\fp])\cong \Mat_r(\F_\fp). 
$$

\vspace{0.1in}
\noindent 
\textbf{\underline{The recipe}.} Our construction of semifield codes relies on three main ingredients:
\begin{itemize}
\item A Drinfeld module $\phi$ of rank $r$.
\item An $\F_q$-linear subspace $\cM \subset \End(\phi)$.
\item A prime $\fp \in A$, distinct from the $A$-characteristic of $\phi$, such that every nonzero $u \in \cM$ acts \textit{invertibly} on $\phi[\fp]$, i.e., as an invertible linear transformation. 
\end{itemize}
Given this, we obtain an embedding
\[
\iota_\fp \colon \cM \longhookrightarrow \End_{\F_\fp}(\phi[\fp]) \cong \Mat_r(\F_\fp), 
\]
where each nonzero $\iota_\fp(u)$ has full rank. If additionally 
\[\dim_{\F_q} \cM = r \cdot \deg(\fp),\] then the image $\iota_\fp(\cM)$ is a semifield code.

\vspace{0.1in}

The flexibility of choosing $\phi$ and $\cM$ makes this construction very general. 
The advantage of this perspective is that it provides access to the rich theory of 
Drinfeld modules. The central challenge is to find a suitable $\fp$ satisfying the invertibility condition; here one can leverage various tools from the theory of Drinfeld modules, such as the equality between the $\tau$-degree of an endomorphism and the $T$-degree of its determinant on the associated Tate module. We focus on the semifield case, though this strategy extends to MRD codes by relaxing the invertibility requirement.

\begin{example}\label{BasicExample}
We demonstrate the idea with a simple example. 
Define $\phi$ by $\phi_T=\tau^n$. The rank of $\phi$ is $n$, $\chr_A(\phi)=T$, and $\End(\phi)=R$. 
Let $\fp\neq T$ be a prime of degree $s$ and let 
$$
\cM=\{u\in R\mid \deg(u)\leq s-1\}. 
$$
We claim that every $0\neq u\in \cM$ acts invertibly on $\phi[\fp]$. We argue by contradiction. Assume $0\neq \beta\in \phi[\fp]$ 
is annihilated by $u$. Then the $A$-submodule of $\phi[\fp]$ generated by $\beta$ has size $q^s$, yet $\deg(u)<s$ implies $\#\ker(u)\leq q^{s-1}$, a contradiction. 
Since $\dim_{\F_q}\cM=ns=\deg(\fp)\cdot n$, the image $\iota_\fp(\cM)$ is a semifield code.
\end{example}

\subsection{Main results} 
In Section \ref{secSheekey}, we give a reinterpretation of Sheekey's construction 
\cite{Sheekey} in terms of Drinfeld modules and a short proof 
of one of the main results in \cite{Sheekey}. 
Sheekey's construction has been generalized in \cite{TZ} and \cite{LSS} using the 
methods in \cite{Sheekey}, which rely on direct computations in skew-polynomial rings 
and their quotients. 

In our Drinfeld module reinterpretation of Sheekey's construction, as in Example \ref{BasicExample}, one takes $\phi_T=\tau^n$, but now 
$$
\cM \colonequals \{u_0+u_1\tau\cdots+u_s\tau^s\in R\mid  u_s=\eta u_0\}
$$
for a fixed $\eta\in k$. Let $\fp\neq T$ be a monic prime in $A$ of degree $s$ with constant coefficient $\fp_0$. We show that if 
$\Nr_{k/\F_q}(\eta)\cdot (-1)^{s(n-1)}\fp_0\neq 1$, then the image of $\cM$ in $\End_{\F_\fp}(\phi[\fp])$ is a semifield code. The proof 
uses the duality between $u \in \cM$ 
being an endomorphism of $\phi$ and $\phi_T$ being an endomorphism of the auxiliary 
Drinfeld module $\psi$ defined by $\psi_T=u$: this converts invertibility questions into questions about 
characteristic polynomials of endomorphisms, for which standard tools from the theory of Drinfeld 
modules are available. Note that choosing $\eta=0$ gives Example \ref{BasicExample}. The interesting fact about Sheekey's construction 
is that the semifield codes thus constructed in some cases correspond to genuinely new families of semifields, with 
parameters for which no examples were known before.

In Section \ref{s2C}, we extend the setup as follows: 
Let $\ell$ and $s$ be integers such that $n=\lcm(\ell, s)$. We choose $\phi$ so that $\phi_T\in \twistt{\F_{q^s}}{\tau^\ell}\subset \twist{k}$ 
and $\deg(\phi_T)=r\ell$. In this case, $\twistt{\F_{q^\ell}}{\tau^s}\subset \End(\phi)$. Let $\fp$ be a prime different from 
$\chr_A(\phi)$ and denote $d=\deg_T(\fp)$. We take 
$$
\cM=\left\{\sum_{i=0}^{rd-1} b_i\tau^{s i}\mid b_i\in \F_{q^\ell}\right\}. 
$$

Unlike the constructions in \cite{Sheekey, TZ, LSS, GTLNS}, which all take $\phi_T = \tau^n$, 
our construction uses Drinfeld modules with smaller endomorphism rings, exploiting 
the structure of division subalgebras of $k(\tau)$.
Under some technical assumptions on the reduction modulo $\fp$ of the minimal polynomial $m_\phi(x)$ of the Frobenius endomorphism $\pi=\tau^n\in \End(\phi)$, 
we show that the image of $\cM$ in $\End_{\F_\fp}(\phi[\fp])$ is a semifield code. Here again the duality between $u\in \cM$ being an 
endomorphism of $\phi$ and $\phi_T$ being an endomorphism of $\psi$ defined by $\psi_T=u$ comes into play. Next, 
using the Chebotarev density theorem, we show that one can always choose $\phi$ and $\fp$ so that the assumptions on $m_\phi(x)$ 
are satisfied. Finally, we compute the nuclear parameters of our code and illustrate the construction with an 
explicit example. While the nuclear parameters we obtain do not immediately 
distinguish our codes from known families, the construction is conceptually new  
and may yield new semifields for appropriate choices of parameters.


\section{Preliminaries on Drinfeld modules}\label{sDM}  In this section we recall some basic facts 
from the theory of Drinfeld modules that will be used later in the paper. Most of the proofs can be found in \cite{PapikianGTM}. 

\subsection{Definition} Let $F=\F_q(T)$ be the fraction field of $A=\F_q[T]$. For $0\neq a\in A$, let $\deg(a)\in \Z_{\geq 0}$ be the usual degree 
of $a$ as a polynomial in $T$. 

Let $K$ be a field containing $\F_q$ as a subfield. Let 
$\twist{K}$ be the twisted polynomial ring already discussed in $\S$\ref{ss1.1} in the case when $K=\F_{q^n}$ (so the commutation rule is $\tau a=a^q\tau$ for all $a\in K$). 
There is a homomorphism \[ \partial\colon \twist{K}\to K, \quad \sum_{i=0}^n a_i \tau^i \mapsto a_0,\] called the \textit{derivative}. 
For $f=a_h\tau^h+\cdots +a_n\tau^n$ with $0\leq h\leq n$, $a_h\neq 0$ and $a_n\neq 0$, we define the \textit{height} of $f$ as $\Ht(f)=h$, and the \textit{degree} of $f$ as $\deg_\tau(f)=n$. 
The map $$f=\sum_{i=0}^n a_i \tau^i \longmapsto f(x)=\sum_{i=0}^n a_i x^{q^i}$$ gives a ring isomorphism between $\twist{K}$ 
and the ring $\atwist{K}{x}$ of $\F_q$-linear polynomial, where on $\atwist{K}{x}$ the multiplication is defined via the substitution $f_1\circ f_2=f_1(f_2(x))$. 


A \textit{Drinfeld module} of rank $r\geq 1$ over $K$ is an $\F_q$-algebra homomorphism 
\begin{align*}
	\phi\colon A &\To \twist{K}, \\ 
	a &\longmapsto \phi_a=g_0(a)+g_1(a)\tau+\cdots +g_n(a)\tau^n,
\end{align*}
such that for $a\neq 0$ we have $n=\deg(a) r$ and $g_n(a)\neq 0$. Note that $\phi$ is always injective, so gives an embedding of $A$ 
into the non-commutative ring $\twist{K}$. Moreover, $\phi$ is uniquely determined by $\phi_T$, so to define a Drinfeld module over $K$ 
one simply needs to choose $g_0, g_1, \dots, g_r\in K$ such that $g_r\neq 0$ and put $\phi_T=g_0+g_1\tau+\cdots+g_r\tau^r$. 
We define a homomorphism $\gamma\colon A\to K$ via $a\mapsto \partial \phi_a$. The \textit{$A$-characteristic of $\phi$} is 
$\chr_A(\phi)\colonequals \ker(\gamma)$; note that $\chr_A(\phi)$ is either $0$ or a prime of $A$. 

\subsection{Torsion module and endomorphisms}\label{secTM} 
Let $\phi\colon A\to \twist{K}$ be a Drinfeld module over $K$ of rank $r$. Through $\phi$, the algebraic closure $\oK$ of $K$ acquires an $A$-module structure, where $a\in A$ acts on $\beta\in \oK$ by $a\ast \beta=\phi_a(\beta)$, i.e., we substitute $\beta$ into the 
polynomial $\phi_a(x)$. Denote this module by ${^\phi}{\oK}$. One is interested in its torsion submodule. More precisely, given $0\neq a\in A$, 
the \textit{$a$-torsion submodule} of ${^\phi}{\oK}$, denoted $\phi[a]$,  is the set of roots of the polynomial $\phi_a(x)$. It is easy to check that $\phi[a]$ 
is indeed an $A$-submodule of ${^\phi}{\oK}$, i.e., is invariant under the action of any $b\in A$, where $b$ acts via $\phi_b$. 
If $\chr_A(\phi)$ does not divide $a$, then $\phi[a]\cong (A/aA)^r$; see \cite[Cor. 3.5.3]{PapikianGTM}. 

Let $\fp\lhd A$ be a prime different from $\chr_A(\phi)$. The finite $A$-modules $\phi[\fp^n]\cong (A/\fp^n)^r$, $n\geq 1$, form a projective system 
with transition maps $\phi[\fp^{n+1}]\to \phi[\fp^n]$, $\alpha\mapsto \phi_\fp(\alpha)$. Taking the inverse limit, one obtains the \textit{$\fp$-adic Tate module} 
$T_\fp(\phi)\cong A_\fp^r$ of $\phi$, where $A_\fp$ is the ring of integers of the completion of $F$ with respect to the absolute value arising from the $\fp$-adic valuation on $F$. 

The \textit{endomorphisms of $\phi$} defined over $K$ are the elements of $\twist{K}$ which induce endomorphisms of ${^\phi}\oK$. More explicitly,
\begin{align*}
\End(\phi) &=\{u\in \twist{K}\mid u\phi_a=\phi_au\text{ for all }a\in A\} \\ 
& = \{u\in \twist{K}\mid u\phi_T=\phi_Tu\}. 
\end{align*}
Obviously, the image $\phi(A)$ of $A$ under $\phi$ is in $\End(\phi)$. In fact, it is easy to check that $\End(\phi)$ is an $A$-algebra, where we 
identify $\phi(A)$ with $A$. Moreover, one can show 
that $\End(\phi)$ is a free $A$-module of rank $\leq r^2$; see \cite[Thm. 3.4.1]{PapikianGTM}. 

Any $u\in \End(\phi)$ induces an $A$-linear transformation on $\phi[a]$, and also an $A_\fp$-linear transformation on $T_\fp(\phi)$. 
Let $P_{\phi, u}(x)\in A_\fp[x]$ denote the characteristic polynomial of this latter transformation. It turns out that the coefficients of $P_{\phi, u}(x)$ are 
actually in $A$ and do not depend on $\fp$; see \cite[Thm. 3.6.6]{PapikianGTM}. The characteristic polynomial of $u$ acting on 
on the $\F_\fp$-vector space $\phi[\fp]\cong \F_\fp^r$ is the reduction modulo $\fp$ of $P_{\phi, u}(x)$. In particular, $u$ acts \textit{invertibly} on $\phi[\fp]$ 
 (i.e., as an invertible linear transformation) if and only if $P_{\phi, u}(0)$ is not divisible by $\fp$.  
 
 One of the key technical questions arising in this 
 paper is to show that certain endomorphisms act invertibly on $\phi[\fp]$ for an appropriately chosen $\fp$. More precisely, 
 we choose a specific $\F_q$-vector subspace $\cM\subset \End(\phi)$--\textit{the message space}--and show that every $0\neq u\in \cM$
acts invertibly on $\phi[\fp]$. In that situation, $\cM$ defines a semifield code in $\End_{\F_\fp}(\phi[\fp])\cong \Mat_r(\F_\fp)$ if 
\begin{equation}\label{eqSFdim}
\dim_{\F_q}\cM=r\cdot \deg(\fp). 
\end{equation}
 
 \subsection{Anderson motive of Drinfeld module}\label{secMotive} Let $\phi\colon A\to 
 \twist{K}$ be a Drinfeld module over $K$ of rank $r$ defined by 
 $$
 \phi_T=g_0+g_1\tau+\cdots+g_r\tau^r. 
 $$
 We associate with $\phi$ a left $K[T, \tau]$-module $M_\phi$, called the \textit{Anderson motive of $\phi$} \cite{Anderson}, by taking $M_\phi=\twist{K}$ and defining 
 \begin{align*}
 	u\circ m & = um \quad \text{for all $u\in \twist{K}$ and $m\in M_\phi$}, \\ 
 	a\circ m & = m\phi_a \quad \text{for all $a\in A$ and $m\in M_\phi$}, 
 \end{align*}
 where $um$ and $m\phi_a$ are multiplications in $\twist{K}$. By the proof of \cite[Lem. 3.4.4]{PapikianGTM}, $M_\phi$ 
 is freely generated over $K[T]$ by the elements  $\{1, \tau, \dots, \tau^{r-1}\}$; we call this set the \textit{standard basis} of $M_\phi$. 
 
 Let $u\in \End(\phi)$. Then $u$ induces an endomorphism of the $K[T, \tau]$-module $M_\phi$ by $m\mapsto mu$; see the proof of \cite[Prop. 3.4.5]{PapikianGTM}. 
  Let $U=(u_{i,j})_{0\leq i, j\leq r-1}\in \Mat_r(K[T])$ be the matrix 
 by which $u$ acts on $M_\phi$ as a free $K[T]$-module with respect to the standard basis, i.e., 
 $$
 \tau^i u = u_{0, i} + u_{1, i}\tau+\cdots +u_{r-1, i}\tau^{r-1}, \qquad 0\leq i\leq r-1. 
 $$
 By Theorem 3.6.6 and Proposition 3.6.7 in \cite{PapikianGTM}, 
 \begin{equation}\label{eqPu}
 P_{\phi, u}(x)=\det(xI_r-U),
 \end{equation}
 i.e., the characteristic polynomial of $u$ is equal to the characteristic polynomial of the matrix $U$. 
 In particular, $u$ acts invertibly on $\phi[\fp]$ if and only if $\fp$ does not divide $\det(U)$ in $A$. 
 
 For $f=f_0+f_1T+\cdots+f_nT^n\in K[T]$ and $i\geq 0$, denote $$f^{(q^i)} = f_0^{q^i}+f_1^{q^i}T+\cdots+f_n^{q^i}T^n.$$  
 For a matrix $S=(s_{ij})\in \Mat_{m, n}(K[T])$, put $S^{(q^i)}=\left(s_{ij}^{(q^i)}\right)$. 
 Let $$u=u_0+u_1\tau+\cdots+u_{r-1}\tau^{r-1}$$ be the expansion of $u$ in the standard basis. Then 
 \begin{align*}
 	\tau u & = u_0^{(q)}\tau+u_1^{(q)}\tau^2+\cdots+u_{r-1}^{(q)}\tau^r \\ 
 	& = u_0^{(q)}\tau+u_1^{(q)}\tau^2+\cdots+u_{r-1}^{(q)}(g_r^{-1}(T-g_0-g_1\tau-\cdots-g_{r-1}\tau^{r-1})) \\ 
 	& = u_{r-1}^{(q)}(g_r^{-1}(T-g_0)) + (u_0^{(q)}-u_{r-1}^{(q)}(g_r^{-1}g_1)) \tau+(u_1^{(q)}-u_{r-1}^{(q)}(g_r^{-1}g_2)) \tau^2+\cdots \\ &+
 	(u_{r-2}^{(q)}-u_{r-1}^{(q)}(g_r^{-1}g_{r-1})) \tau^{r-1}. 
 \end{align*}
 Put 
 \begin{equation}\label{eqSphi}
 	S_\phi=\begin{bmatrix} 
 		0 & 0 &\cdots & 0 &(T-g_0)/g_r \\ 
 		1 & 0 & \cdots & 0 &-g_1/g_r \\ 
 		\vdots & \vdots& \cdots &\vdots & \vdots \\ 
 		0 & 0 &\cdots & 1 & -g_{r-1}/g_r
 	\end{bmatrix}. 
 \end{equation}
 From the above computation one deduces that for any $1\leq i\leq r-1$ we have 
 \begin{align*}
 	\begin{bmatrix} u_{0,i}\\ \vdots \\ u_{r-1, i}\end{bmatrix} = 	S_\phi \begin{bmatrix} u_{0, i-1}\\ \vdots\\ u_{r-1, i-1}\end{bmatrix}^{(q)} & = 
 	S_\phi S_\phi^{(q)}  \begin{bmatrix} u_{0, i-2}\\ \vdots\\ u_{r-1, i-2}\end{bmatrix}^{(q^2)} = \cdots \\ &= 
 	S_\phi S_\phi^{(q)} \cdots  S_\phi^{(q^{i-1})} \begin{bmatrix} u_{0}\\ \vdots\\ u_{r-1}\end{bmatrix}^{(q^i)}. 
 \end{align*}

\begin{example}\label{ssExample} Here we construct a semifield code using the tools discussed earlier in this section. 

Let $r$ and $s$ be positive integers, let $n=\lcm(r, s)$, let $k=\F_{q^n}$, and let $t\in \F_{q^s}$. 
Let $\phi\colon A\to \twist{k}$ be the Drinfeld module defined by 
$$\phi_T=t+\tau^r.$$ 
Let  
$$
\cM=\left\{ a+b\tau^s\mid a, b\in \F_{q^r}\right\} \subset \End(\phi). 
$$
We further assume that $s<r$, so that $u=a+b\tau^s\in \cM$ is the expansion of $u$ with respect to the standard basis of $M_\phi$. In that case,   
one computes that the matrix $U$ by which $u$ acts on $M_\phi$ with respect to the standard basis is  
\begin{equation}\label{eqU2}
	U=\diag(a, a^q, \dots, a^{q^{r-1}})+\begin{bmatrix} & D_{s}\\ I_{r-s} & \end{bmatrix}\diag(b, b^q, \dots, b^{q^{r-1}}), 
\end{equation}
where 
$$
D_s =\diag(T-t, T-t^q, \cdots, T-t^{q^{s-1}})\in \Mat_{s}(k[T]). 
$$
Next, one computes that, when $r\neq 2s$, the determinant of $U$ in \eqref{eqU2} is equal to 
$$
\det U =\Nr_{\F_{q^r}/\F_q}(a) +(-1)^{r-1} \Nr_{\F_{q^r}/\F_q}(b)\cdot \Nr_{\F_{q^s}/\F_q}(T-t).  
$$

We want to show that for an appropriate choice of $t$ and $\fp$, all $u\in \cM$ act invertibly on $\phi[\fp]$. First, note that given any prime 
$\fq\lhd A$ of degree $s$, we can choose $t\in \F_{q^s}$ so that $\fq=\Nr_{\F_{q^s}/\F_q}(T-t)$. Next, for a prime $\fp\lhd A$, by Dirichlet's theorem, any element of $\F_\fp$ is the residue modulo $\fp$ of some prime $\fq$. Thus, if $\deg(\fp)\geq 2$, 
there is always $\fq$ which is \textit{not} congruent modulo $\fp$ 
to an element of $\F_q$. Assume $\fp$ and $\fq$ are chosen with this property.  

If $u\in \cM$ is not invertible on $\phi[\fp]$, then $\fp$ divides
$$\det(U)= \Nr_{\F_{q^r}/\F_q}(a) +(-1)^{r-1} \Nr_{\F_{q^r}/\F_q}(b)\cdot \fq.$$
 If $b=0$, this is obviously not possible, so we assume $b\neq 0$. With this assumption, if $\fp$ 
divides $\det(U)$, then $\fq$ is congruent modulo $\fp$ to an element in $\F_q$, which leads to a contradiction. Thus, we can always choose $s$ and $t\in \F_{q^s}$ 
so that the images of nonzero elements of $\cM$ in $\End_{\F_\fp}(\phi[\fp])$ are invertible. Finally, note that $\dim_{\F_q}(\cM)=2r$, so 
by \eqref{eqSFdim} we get a semifield code when $\deg(\fp)=2$. 
\end{example}

\section{Reinterpretation of Sheekey's construction}\label{secSheekey} Let $n\geq 1$ be an integer, let $k=\F_{q^n}$ be the degree 
$n$ extension of $\F_q$, and let $R=\twist{k}$ be the twisted polynomial ring with coefficients in $k$. 
The  center of $R$ is $A'\colonequals \F_q[\pi]$, where $\pi\colonequals \tau^n$. Note that $R$ is a free $A'$-algebra of rank $n^2$. 

\begin{defn} A \textit{central left multiple} of nonzero $u\in R$ is a polynomial $f\in A'$ such that $f=w u$ for some $w\in R$. 
	The \textit{minimal central left multiple} of $u$ is the unique central left multiple of $u$ which is monic and has minimal degree in $\pi$. 
\end{defn}

\begin{rem} The existence of a central left multiple can be seen, for example, by taking the norm of $u$ from $\F_q[u, \pi]$ into $\F_q[\pi]$. 
	The uniqueness of the minimal central left multiple follows from the minimality of degree and monic assumption, since the difference of two central left multiples is still a central left multiple.  Finally, using the division algorithm in $A'$, we see that any central left multiple of $u$ 
	is divisible by the minimal central left multiple. 
\end{rem}

Given $0\neq u\in R$, we define a Drinfeld module $\psi\colon A\to \twist{k}$ by $\psi_T=u$. Let $r_u\colonequals \deg_\tau u = \rank(\psi)$. 
Note that $\pi\in \End(\psi)$, since $\pi$ is in the center of $R$. 
Let $m_{u}(x)\in A[x]$ be the minimal polynomial of $\pi$ over $\F_q(u)$. 
Let $P_{u}(x)\in A[x]$ be the characteristic polynomial of $\pi$ acting on $T_\fp(\psi)$ for any $\fp\neq \chr_A(\psi)$. 
Let $\overline{m}_{u, T}(x)\in \F_q[x]$ be the polynomial obtained by reducing the coefficients of $m_{u}(x)$ modulo $T$, and 
define $\overline{P}_{u, T}(x)\in \F_q[x]$ similarly. Denote $L=\F_q(u)$, $\widetilde{L}=\F_q(u, \pi)$, and 
$$
d_u=[\widetilde{L}:L]. 
$$ 


\begin{lem}\label{lemCLM}
	With previous notation, $\overline{m}_{u, T}(\pi)$ is a central left multiple of $u$. 
\end{lem}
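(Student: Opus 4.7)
The plan is to unwind what the identity $m_u(\pi)=0$ means inside $R$, and then use that reducing modulo $T$ is precisely what allows one to factor a $u$ out of each term.

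First I would write
\[
m_u(x) \;=\; x^{d_u} + \sum_{i=0}^{d_u-1} c_i(T)\,x^i, \qquad c_i(T)\in A,
\]
and record that $m_u(\pi)=0$ as an identity in $\End(\psi)\subset R$ (here I use that $\pi$ is integral over $A$ via $\psi$, so its minimal polynomial over $\F_q(u)$ already has coefficients in $A$). Evaluating through $\psi$ yields
\[
\pi^{d_u} + \sum_{i=0}^{d_u-1} \psi_{c_i(T)}\,\pi^i \;=\; 0 \quad \text{in } R.
\]
By definition, $\overline{m}_{u,T}(\pi)$ is obtained from $m_u(\pi)$ by replacing each $c_i(T)$ with its constant term $c_i(0)\in\F_q$, so subtracting and using that $\psi$ is an $\F_q$-algebra homomorphism gives
\[
\overline{m}_{u,T}(\pi) \;=\; -\sum_{i=0}^{d_u-1} \psi_{c_i(T)-c_i(0)}\,\pi^i.
\]

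Now comes the reason for reducing modulo $T$ specifically: $T$ divides each $c_i(T)-c_i(0)$ in $A$. Writing $c_i(T)-c_i(0)=\tilde c_i(T)\cdot T$ and applying $\psi$ gives
\[
\psi_{c_i(T)-c_i(0)} \;=\; \psi_{\tilde c_i(T)}\cdot \psi_T \;=\; \psi_{\tilde c_i(T)}\cdot u,
\]
producing the desired factor of $u$ on the right of each summand. Since $\pi$ is central in $R$, I can slide each $u$ past the corresponding $\pi^i$ and collect:
\[
\overline{m}_{u,T}(\pi) \;=\; \Bigl(-\sum_{i=0}^{d_u-1} \psi_{\tilde c_i(T)}\,\pi^i\Bigr)\cdot u \;=\; w\cdot u,
\]
with $w\in R$. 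Because $\overline{m}_{u,T}(\pi)$ manifestly lies in $\F_q[\pi]=A'$, this exhibits it as a central left multiple of $u$.

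There is no genuine obstacle here: once one recognises that the reduction $c_i(T)\mapsto c_i(0)$ is exactly what frees a factor of $\psi_T=u$ from each coefficient, and that centrality of $\pi$ puts that factor on the correct side, the whole argument is a one-step manipulation of the relation $m_u(\pi)=0$.
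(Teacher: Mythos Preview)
Your argument is correct and is genuinely different from the paper's. The paper argues indirectly: it observes that $f=\overline{m}_{u,T}(\pi)$ annihilates $\psi[T]$, and then invokes the factorization criterion \cite[Lem.~3.1.16]{PapikianGTM}, which requires checking $\Ht(f)\geq \Ht(u)$. That height inequality is trivial when $\chr_A(\psi)\neq T$, but when $\chr_A(\psi)=T$ the paper must appeal to the structural results $P_u(x)=m_u(x)^{r_u/d_u}$ and \cite[Thm.~4.2.13]{PapikianGTM} to compute the height of $f$. Your route bypasses all of this: you work directly with the algebraic identity $m_u(\pi)=0$ in $\End(\psi)\subset R$, subtract off the constant terms, and use that each $c_i(T)-c_i(0)$ is divisible by $T$ to pull out a factor of $\psi_T=u$ on the right. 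Centrality of $\pi$ then lets you collect the factor. This is more elementary (no torsion, no heights, no case split, no appeal to the theory of Drinfeld modules over finite fields) and makes transparent \emph{why} reduction modulo $T$ is the right move. The paper's approach, on the other hand, ties the lemma to the geometry of $\psi[T]$ and the height formalism, which is the language used elsewhere in the section; so it integrates more naturally with the surrounding arguments even if it is heavier for this particular statement.
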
 
\begin{proof}
	By definitions, $f=\overline{m}_{u, T}(\pi)$ annihilates $\psi[T]$, i.e., the roots of $u(x)$ are roots of $f(x)$.   If we show 
	that $\Ht(f)\geq \Ht(u)$, then $f=wu$ follows from Lemma 3.1.16 in \cite{PapikianGTM}. If 
	$\chr_A(\psi)\neq T$ (equivalently, $\Ht(u)=0$), then the desired inequality is trivially true. 
	
	Now assume $\chr_A(\psi)=T$. We recall that (see \cite[Thm. 4.2.2]{PapikianGTM}) 
	\begin{equation}\label{eqPandM}
		P_u(x)=m_u(x)^{r_u/d_u}. 
		\end{equation} 
	Moreover, \cite[Thm. 4.2.13]{PapikianGTM} implies that 
	$\overline{P}_{u, T}(x)=x^{\Ht(u)}\cdot g(x)$, where $g(x)\in \F_q[x]$ and $g(0)\neq 0$. Thus, 
	$$
	\Ht(f)=\frac{n\cdot d_u}{r_u}\Ht(u). 
	$$
	On the other hand, by  \cite[(4.1.3)]{PapikianGTM}, 
	$$
	\frac{n\cdot d_u}{r_u} = [\widetilde{L}:\F_q(\pi)]\geq 1. 
	$$
	Combining these two formulas, we get $\Ht(f)\geq \Ht(u)$ as desired. 
\end{proof}

\begin{lem}\label{prop1.8} Assume $\partial(u)\neq 0$. 
	The polynomial $u$ is irreducible in $R$ if and only if $\overline{P}_{u, T}(x)$ is irreducible in $\F_q[x]$. 
\end{lem}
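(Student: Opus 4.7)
The plan is to translate the question into linear algebra via a correspondence between right divisors of $u$ in $R$ and $\pi$-invariant $\F_q$-subspaces of $\psi[T]$. The hypothesis $\partial(u)\neq 0$ guarantees $\chr_A(\psi)\neq T$, so by $\S$\ref{secTM} we have $\psi[T]\cong \F_q^{r_u}$, and $\pi$ acts $\F_q$-linearly on $\psi[T]$ with characteristic polynomial $\overline{P}_{u,T}(x)$.

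For the implication ``$u$ reducible $\Longrightarrow$ $\overline{P}_{u,T}$ reducible'', I take any nontrivial factorization $u=u_1u_2$ in $R$. Since $\partial(u)=\partial(u_1)\partial(u_2)\neq 0$, both $u_1$ and $u_2$ are separable, and $V\colonequals \ker(u_2)\subset \overline{k}$ is an $\F_q$-subspace with $|V|=q^{\deg_\tau u_2}$. Clearly $V\subset \ker(u)=\psi[T]$, and $V$ is $\pi$-stable because $\pi$ lies in the center of $R$, so $u_2\pi=\pi u_2$. The strict inequalities $0<\deg_\tau u_2<r_u$ then exhibit $V$ as a proper nonzero $\pi$-invariant $\F_q$-subspace of $\psi[T]$, forcing $\overline{P}_{u,T}$ to factor.

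For the converse, assume $\overline{P}_{u,T}$ is reducible. A finite-dimensional $\F_q[\pi]$-module is simple if and only if the characteristic polynomial of $\pi$ on it is irreducible (both conditions being equivalent to the module being isomorphic to $\F_q[\pi]/(p)$ for some irreducible $p$). Hence $\psi[T]$, viewed as an $\F_q[\pi]$-module, is not simple, and there exists a proper nonzero $\pi$-invariant $\F_q$-subspace $V\subset \psi[T]$. Since $V$ is a finite $\pi$-stable $\F_q$-subspace of $\overline{k}$ and $\Gal(\overline{k}/k)$ is topologically generated by $\pi$, the monic separable $\F_q$-linear polynomial $v(x)\colonequals \prod_{\alpha\in V}(x-\alpha)$ has coefficients in $k$ and thus lies in $R$, with $\partial(v)\neq 0$. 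From $\ker v=V\subset \ker u$ and \cite[Lem.~3.1.16]{PapikianGTM}, there is a right factorization $u=wv$ in $R$; as $0<\deg_\tau v=\dim_{\F_q}V<r_u$, this factorization is nontrivial, so $u$ is reducible.

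The conceptual crux is the dictionary between $\pi$-stable $\F_q$-subspaces of $\overline{k}$ and $\F_q$-linear polynomials in $R=k\{\tau\}$, controlled by Galois descent from $\overline{k}$ to $k$. Given this dictionary, both directions reduce to standard facts about simplicity of $\F_q[\pi]$-modules and the multiplicativity of $\partial$, so no individual step is a serious obstacle; the identity \eqref{eqPandM} does not need to be invoked explicitly, only the general equivalence between char-poly irreducibility and module simplicity.
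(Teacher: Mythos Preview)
Your proof is correct and follows essentially the same strategy as the paper's: both set up the dictionary between proper nonzero $\pi$-invariant $\F_q$-subspaces of $\psi[T]$ and nontrivial right divisors of $u$ in $R$, invoking \cite[Lem.~3.1.16]{PapikianGTM} together with the observation that $\pi$-stability of a subspace is exactly the condition that the associated $\F_q$-linear polynomial has coefficients in $k$. Your version spells out a bit more explicitly why $\ker(u_2)$ is $\pi$-stable (via centrality of $\pi$) and frames the converse through simplicity of $\F_q[\pi]$-modules, but the underlying argument is identical.
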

\begin{proof}
	Note that $\psi[T]$ is the set of roots of the $\F_q$-linear polynomial $\psi_T(x)=u(x)$, which we assume is 
	separable.  Since $T\neq \chr_A(\psi)$, the polynomial $\overline{P}_{u, T}(x)$ is the characteristic polynomial of $\pi$ acting on $\psi[T]$. 
	Therefore, the polynomial $\overline{P}_{u, T}(x)$ is reducible in $\F_q[x]$ if and only if $\psi[T]$ has a $\pi$-invariant $\F_q$-subspace. The 
	elements of an $\F_q$-subspace  $W\subseteq \psi[T]$ give an $\F_q$-linear polynomial $w(x)=\prod_{\alpha\in W}(x-\alpha)\in \overline{k}[x]$. 
By \cite[Lem. 3.1.16]{PapikianGTM}, there is $g\in \twist{\overline{k}}$ such that $u=g w$. 
The subspace $W$ is $\pi$-invariant if and only if the coefficients of $w(x)$ are in $k$, 
so the decomposition $u=gw$ takes place in $\twist{k}$. Thus, $\overline{P}_{u, T}(x)$ is reducible in $\F_q[x]$ if and only if $u$ is reducible in $R$. 
\end{proof}


\begin{thm}\label{thm2.5} Assume $\partial(u)\neq 0$. 
	If $u$ is irreducible in $R$, then the minimal central left multiple of $u$ is $\overline{P}_{u, T}(\pi)$. 
\end{thm}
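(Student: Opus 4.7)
The plan is to combine Lemma \ref{lemCLM} (which produces a candidate central left multiple from the minimal polynomial $m_u$) with Lemma \ref{prop1.8} (which relates irreducibility of $u$ to irreducibility of $\overline{P}_{u,T}$), and to glue them using the relation \eqref{eqPandM} between $P_u$ and $m_u$.

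First I would show that $\overline{P}_{u,T}(\pi)$ is a central left multiple of $u$. By \eqref{eqPandM} we have
\[
P_u(x)=m_u(x)^{r_u/d_u} \quad \text{in } A[x],
\]
and reducing the coefficients modulo $T$ gives $\overline{P}_{u,T}(x)=\overline{m}_{u,T}(x)^{r_u/d_u}$ in $\F_q[x]$. Since $\partial(u)\neq 0$ and $u$ is irreducible in $R$, Lemma \ref{prop1.8} forces $\overline{P}_{u,T}(x)$ to be irreducible in $\F_q[x]$. Hence the exponent $r_u/d_u$ must equal $1$, and $\overline{P}_{u,T}(x)=\overline{m}_{u,T}(x)$. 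Lemma \ref{lemCLM} then says $\overline{m}_{u,T}(\pi)=\overline{P}_{u,T}(\pi)$ is a central left multiple of $u$.

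It remains to show minimality. Let $h(\pi)\in A'$ be the minimal central left multiple of $u$. By the remark following the definition, $h(\pi)$ divides every central left multiple in $A'$, so in particular $h(x)\mid \overline{P}_{u,T}(x)$ in $\F_q[x]$. Since $\overline{P}_{u,T}(x)$ is monic and irreducible in $\F_q[x]$, the only monic divisors are $1$ and $\overline{P}_{u,T}(x)$ itself. The case $h(x)=1$ would mean $1=wu$ for some $w\in R$, making $u$ a unit of $R=\twist{k}$; but the units of $R$ are $k^\times$, and $u$, being irreducible in $R$, has positive $\tau$-degree. Therefore $h(x)=\overline{P}_{u,T}(x)$, as claimed.

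There is no real obstacle once Lemmas \ref{lemCLM} and \ref{prop1.8} are in hand: all the serious work has already been done in setting up the dictionary between factorization in $R$ and factorization of characteristic polynomials on $\psi[T]$. The only subtle point is the exponent argument $r_u/d_u=1$, which is what converts a statement about $\overline{m}_{u,T}$ into the sharper statement about $\overline{P}_{u,T}$ in the conclusion.
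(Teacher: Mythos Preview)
Your proof is correct and follows essentially the same route as the paper's: both combine Lemma~\ref{prop1.8} (irreducibility of $u$ $\Rightarrow$ irreducibility of $\overline{P}_{u,T}$), the relation \eqref{eqPandM} (forcing $\overline{m}_{u,T}=\overline{P}_{u,T}$), and Lemma~\ref{lemCLM} (producing the central left multiple), then deduce minimality from irreducibility. Your version is slightly more explicit in spelling out why the exponent $r_u/d_u$ must equal $1$ and in ruling out the trivial divisor $h(x)=1$, but the logic is the same.
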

\begin{proof}
	By Lemma  \ref{prop1.8}, the assumption that $u$ is irreducible implies that 
	$\overline{P}_{u, T}(x)\in \F_q[x]$ is irreducible. In that case, by \eqref{eqPandM}, $\overline{m}_{u, T}(x)$ is also irreducible and 
	$\overline{m}_{u, T}(x)= \overline{P}_{u, T}(x)$. Now, by Lemma \ref{lemCLM}, $\overline{m}_{u, T}(\pi)$ is 
	a central left multiple of $u$. Using the irreducibly of $\overline{m}_{u, T}(x)$, we conclude that it is the 
	minimal central left multiple. 
\end{proof}

\begin{rem} Theorem \ref{thm2.5} is equivalent to Theorem 3 in \cite{Sheekey}. We note that there is an unnecessary assumption in \cite{Sheekey} that  
$u$ is monic, but also a missing assumption that $u$ 
	is irreducible in $R$. 
\end{rem}

Let $u=u_0+u_1\tau+\cdots+u_s\tau^s \in \twist{k}$ and assume $u_0\neq 0$, $u_s\neq 0$. 
Let $M_\psi$ be the motive of $\psi$ defined by $\psi_T=u$, and let  $S_\psi\in \Mat_{s}(k[T])$ be the matrix defined in \eqref{eqSphi}. From the calculations in 
$\S$\ref{secMotive}, one deduces that $\pi\in \End(\psi)$ acts on $M_\psi$ by the matrix 
$$S_{u, \pi}=S_\psi S_\psi^{(q)}\cdots S_\psi^{(q^{n-1})}.$$ 
Let $\overline{S}_{\psi}$ be the matrix $S_{\psi}$ modulo $T$, and 
$\overline{S}_{u, \pi}=\overline{S}_\psi\overline{S}_\psi^{(q)}\cdots \overline{S}_\psi^{(q^{n-1})}$. 
Note that 
$\det\overline{S}_{\psi}=(-1)^{1+s}(-u_0/u_s)=(-1)^s(u_0/u_s)$, 
so 
$$
\det \overline{S}_{u, \pi}= (-1)^{sn}N(u_0/u_s),
$$
where 
$$
N(a)\colonequals \Nr_{k/\F_q}(a)=a^{1+q+q^2+\cdots+q^{n-1}}. 
$$ 
By \eqref{eqPu}, the characteristic polynomial of the Frobenius endomorphism of $\psi$ is 
$P_{u}(x)=\det(xI_r-S_{u, \pi})\in A[x].$
This implies that the determinant of $\pi$ acting on $\psi[T]$ is 
$(-1)^{sn}N(u_0/u_s)$.  On the other hand, the same determinant is equal to $(-1)^s \overline{P}_{u, T}(0)$. Thus, 
\begin{equation}\label{eq1}
	N(u_0/u_s)=(-1)^{s(n-1)}\overline{P}_{u, T}(0). 
\end{equation}


Now let $\phi$ be the Drinfeld module of rank $n$ defined by $\phi_T=\pi$. Note that $\chr_A(\phi)=T$ and for any $a(T)\in A$, we have $\phi_a=a(\pi)$. 
For this Drinfeld module the endomorphism ring is as large as possible, $\End(\phi)=R$, since $\pi$ 
is in the center of $R$. Let $\fp\in  A$ be a monic irreducible polynomial of degree $s$. Let $\fp_0\colonequals \fp(0)$ be the constant term of $\fp$. 
Assume $\fp\neq T$. Then $\phi[\fp]\cong \F_\fp^n$ and we have a natural homomorphism 
$$
\iota_\fp\colon R=\End(\phi)\To \End_{\F_\fp}(\phi[\fp])\cong \Mat_n(\F_\fp). 
$$

The next theorem is equivalent to Theorem 7 in \cite{Sheekey} (for $k=1$ in the notation of \textit{loc. cit.}).  

\begin{thm}\label{thm7} 
	Fix $\eta\in k$ and define 
	$$
	\cM = \{u_0+u_1\tau\cdots+u_s\tau^s\in R\mid  u_s=\eta u_0\}. 
	$$
	If $N(\eta)\cdot (-1)^{s(n-1)}\fp_0\neq 1$, then the image of $\cM$ in $\End_{\F_\fp}(\phi[\fp])$ 
	is a semifield code. 
\end{thm}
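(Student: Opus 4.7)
For the theorem I would separate the two defining requirements of a semifield code. The dimension count is the easy part: $\cM$ is parametrized by the free choice of $(u_0, u_1, \ldots, u_{s-1}) \in k^s$ with $u_s = \eta u_0$ forced, so $\dim_{\F_q}\cM = sn$, matching $r \cdot \deg(\fp) = ns$ from \eqref{eqSFdim}. For the invertibility of every $0 \neq u \in \cM$ on $\phi[\fp]$, I would split on whether $u_s = 0$ or not. In the former case (which includes $\eta = 0$, where the hypothesis is automatic), $\deg_\tau u \leq s - 1$ and the rank argument from $\S$\ref{sIntroDM} applies unchanged: a nonzero $\alpha \in \phi[\fp] \cap \ker u$ would generate an $A$-submodule of $\phi[\fp]$ of size $q^s$ inside $\ker u$, contradicting $\#\ker u \leq q^{s-1}$.

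The substantive case is $u_s \neq 0$, which forces $\eta \neq 0$ and $u_0 \neq 0$. Here I would invoke the duality that is the central theme of the section: let $\psi$ be the rank-$s$ Drinfeld module defined by $\psi_T = u$. Then $T \neq \chr_A(\psi)$ and, because $\pi$ is central in $R$, we have $\pi \in \End(\psi)$. Non-invertibility of $u$ on $\phi[\fp]$ is equivalent to the existence of a nonzero $\alpha \in \overline{k}$ killed by both $\psi_T = u$ and $\phi_\fp = \fp(\pi)$, i.e., a nonzero element of $\psi[T] \cap \phi[\fp]$. Because $\pi$ preserves $\psi[T] \cong \F_q^s$ and acts there with characteristic polynomial $\overline{P}_{u,T}(x) \in \F_q[x]$, this intersection is the kernel of $\fp(\overline{S}_{u,\pi})$, which is trivial iff $\fp$ is coprime to the minimal polynomial of $\overline{S}_{u,\pi}$; since $\fp$ is irreducible, this in turn is equivalent to $\fp \nmid \overline{P}_{u,T}(x)$ in $\F_q[x]$, and because $\fp$ and $\overline{P}_{u,T}$ are both monic of degree $s$, this is the same as $\fp \neq \overline{P}_{u,T}$.

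It remains to prevent $\fp = \overline{P}_{u,T}$. Substituting $u_s = \eta u_0$ into \eqref{eq1} yields $\overline{P}_{u,T}(0) = (-1)^{s(n-1)}/N(\eta)$, so the equality $\fp = \overline{P}_{u,T}$ would force $N(\eta)(-1)^{s(n-1)}\fp_0 = 1$, which the hypothesis rules out. The main obstacle is really the duality step: it trades an a priori invertibility problem in $\Mat_n(\F_\fp)$ for a divisibility question in $\F_q[x]$ about a polynomial whose degree exactly matches $\deg(\fp)$, at which point \eqref{eq1} closes the argument at the level of constant terms alone.
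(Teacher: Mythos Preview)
Your proof is correct and reaches the same endpoint as the paper's, but the route in the substantive case $u_s\neq 0$ is genuinely different. The paper argues that if $u$ is not invertible on $\phi[\fp]$ then $\ker(u|_{\phi[\fp]})$ contains a full $\F_\fp$-line, forcing $\deg_\tau u=s$ and $\fp(\pi)=wu$; it then shows $u$ must be irreducible in $R$, invokes Theorem~\ref{thm2.5} to identify $\overline{P}_{u,T}(\pi)$ as the minimal central left multiple of $u$, and concludes $\fp=\overline{P}_{u,T}$. You bypass both the irreducibility step and Theorem~\ref{thm2.5} entirely: once $\psi_T=u$ is set up, the question becomes whether $\fp(\pi)$ has a kernel on $\psi[T]\cong\F_q^s$, which is pure linear algebra over $\F_q$ and reduces immediately to $\fp\mid \overline{P}_{u,T}$, hence $\fp=\overline{P}_{u,T}$ by the degree match. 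Your argument is shorter and more elementary; the paper's argument, on the other hand, makes explicit the structural reason the polynomial $\overline{P}_{u,T}$ appears (as a minimal central left multiple), which ties back into the skew-polynomial framework of \cite{Sheekey}. One small notational slip: $\overline{S}_{u,\pi}$ is the matrix of $\pi$ on $M_\psi/TM_\psi$, not on $\psi[T]$, so ``the kernel of $\fp(\overline{S}_{u,\pi})$'' should really read ``the kernel of $\fp(\pi|_{\psi[T]})$''; the conclusion is unaffected since, by \eqref{eqPu}, both operators have characteristic polynomial $\overline{P}_{u,T}$.
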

\begin{proof}
	The dimension of $\cM$ over $\F_q$ is $ns$ (there is a dependency between the leading and the constant coefficients of the 
	polynomials in $\cM$). Therefore, by \eqref{eqSFdim}, to show that $\cM$ defines a semifield code 
	we need to show that every $0\neq u\in \cM$ acts invertibly on $\phi[\fp]$. 
	
	If $\rank(\iota_\fp(u))<n$, then the kernel of $u\colon \phi[\fp]\to \phi[\fp]$ is nonzero. If $0\neq \alpha\in \ker(u|_{\phi[\fp]})$, then $b\ast \alpha$ 
	is also in $\ker(u|_{\phi[\fp]})$ for all $b\in A$. Thus, $\ker(u)$ contains the 1-dimensional $\F_\fp$-subspace spanned by $\alpha$. But $\deg_\tau(u)\leq s$, 
	so this is possible only if $\deg_\tau(u)=s$ and $\phi_\fp=\fp(\pi)=wu$ for some $w\in R$. Moreover, we claim that $u$ must be irreducible in $R$. 
	If this is not the case, so $u=u_1u_2$ is a decomposition in $R$ with $\deg(u_i)<s$, then either $u_1$ or $u_2$ is not invertible on $\phi[\fp]$. But we just saw that 
	this is not possible. 
	
	Finally, if $u\in \cM$ is irreducible of degree $s$, then by Theorem \ref{thm2.5} the minimal central left multiple of $u$ is $\overline{P}_{u, T}(\pi)$, which is 
	irreducible in $A'$ of degree $s$. Thus, if $\fp(\pi)=wu$, then $\fp(\pi)=\overline{P}_{u, T}(\pi)$. But now, by \eqref{eq1}, we must have 
	$$
	N(1/\eta)=N(u_0/u_s)=(-1)^{s(n-1)}\fp_0,
	$$ 
	which contradicts the assumption of the theorem. 
\end{proof}

\section{A construction of semifield codes}\label{s2C}

In this section we generalize the construction in Example \ref{ssExample}. In this more general setting, explicit computations 
of determinants of endomorphisms become unwieldy, so we use various ad hoc arguments specific to the given situation to show the invertibility 
of endomorphisms acting on torsion points of Drinfeld modules.  

\subsection{Division algebras} 

In addition to the notation used in Section \ref{secSheekey}, let $\Delta=k(\tau)$ be the algebra over $K=\F_q(\pi)$ given in terms 
of generators and relations as follows:
\begin{align*}
	\Delta=k(\pi)\oplus k(\pi)\tau\oplus \cdots\oplus k(\pi)\tau^{n-1}, \\ 
	\tau^n=\pi, \quad \tau\alpha=\alpha^q\tau \quad \text{for all}\quad \alpha\in k. 
\end{align*} 
It is known that $\Delta$ is a central division algebra over $K$ of dimension $n^2$ and the local invariants of $\Delta$ are
zero at all places of $K$ except at $\pi$ and $1/\pi$, where the invariants are $1/n$ and $-1/n$, respectively; see \cite[Section 4.1]{PapikianGTM}. 

\begin{defn}
Let $\ell$  and $s$ be integers such that $n=\lcm(\ell, s)$. Let $g=\gcd(\ell,s)$. Denote 
$$
\Delta(\ell, s)=\F_{q^\ell}(\tau^s), \qquad \Delta(s, \ell)=\F_{q^s}(\tau^\ell), 
$$
so that $\Delta(n, 1)=\Delta$ and $\Delta(1, n)=\F_q(\pi)=K$. 
We consider $\Delta(\ell, s)$ and $\Delta(s, \ell)$ as division subalgebras of $\Delta$. 
\end{defn}

\begin{lem} Denote $K_g=\F_{q^g}(\pi)$. 
	\begin{enumerate}
		\item $Z(\Delta(\ell, s))=Z(\Delta(s,\ell))=K_g$, where $Z(\cdot)$ denotes the center of the corresponding algebra. 
		\item $\Cent_\Delta(\Delta(\ell, s))=\Delta(s, \ell)$ and $\Cent_\Delta(\Delta(s,  \ell))=\Delta(\ell, s)$, where $\Cent_\Delta(\cdot)$ denotes the centralizer in $\Delta$ 
		of the corresponding algebra. 
		\item $\dim_{K_g} \Delta(\ell, s) =(\ell/g)^2$ and $\dim_{K_g} \Delta(s, \ell) =(s/g)^2$. 
	\end{enumerate}
\end{lem}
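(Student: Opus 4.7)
The plan is to work exclusively inside the $k(\pi)$-basis $\{1,\tau,\dots,\tau^{n-1}\}$ of $\Delta$, using the commutation rule $\tau^i\alpha=\alpha^{q^i}\tau^i$ for $\alpha\in k$ together with the centrality of $\pi=\tau^n$. Every element of $\Delta$ admits a unique expansion $z=\sum_{i=0}^{n-1}c_i\tau^i$ with $c_i\in k(\pi)$, and this will let me impose commutation conditions coordinate by coordinate. The three parts are tightly linked: once (2) is in hand, (1) falls out of $Z(\Delta(\ell,s))=\Delta(\ell,s)\cap\Cent_\Delta(\Delta(\ell,s))$, and the bases used for (2) give the counts in (3) at no extra cost. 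A preliminary observation is that, because $g\mid\ell$ and $g\mid s$, one has $\F_{q^g}=\F_{q^\ell}\cap\F_{q^s}$ and $K_g=K_\ell\cap K_s$; in particular $K_g$ is simultaneously contained in $\Delta(\ell,s)$ and $\Delta(s,\ell)$.

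For the centralizer identity in (2), the inclusion $\Delta(s,\ell)\subseteq\Cent_\Delta(\Delta(\ell,s))$ only needs to be checked on the generators: $\F_{q^s}$ commutes with $\F_{q^\ell}$ since both sit in the commutative field $k$, $\F_{q^s}$ commutes with $\tau^s$ because $\alpha^{q^s}=\alpha$ for $\alpha\in\F_{q^s}$, $\tau^\ell$ commutes with $\F_{q^\ell}$ because $\beta^{q^\ell}=\beta$ for $\beta\in\F_{q^\ell}$, and $\tau^\ell\tau^s=\tau^s\tau^\ell$ is trivial. For the reverse inclusion I would take $z=\sum c_i\tau^i\in\Cent_\Delta(\Delta(\ell,s))$: commutation with every $\beta\in\F_{q^\ell}$ gives $c_i(\beta^{q^i}-\beta)=0$ in $k(\pi)$, forcing $c_i=0$ whenever $\ell\nmid i$, and commutation with $\tau^s$ then forces the surviving coefficients to satisfy $c_i^{q^s}=c_i$, i.e.\ $c_i\in K_s$; this places $z$ in the $K_s$-span of $\{\tau^{\ell j}:0\le j<s/g\}$, which is exactly $\Delta(s,\ell)$. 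The same argument with $(\ell,s)$ swapped gives $\Cent_\Delta(\Delta(s,\ell))=\Delta(\ell,s)$. For (1), note that $Z(\Delta(\ell,s))=\Delta(\ell,s)\cap\Delta(s,\ell)$; from the two explicit descriptions in the standard basis, a nonzero term $c_m\tau^m$ ($0\le m<n$) appears in both only if $\lcm(\ell,s)=n\mid m$, which forces $m=0$ with coefficient in $K_\ell\cap K_s=K_g$.

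For (3), I would fix a $K_g$-basis $\omega_0,\dots,\omega_{\ell/g-1}$ of $K_\ell$ and verify that $\{\omega_i\tau^{sj}:0\le i,j<\ell/g\}$ is a $K_g$-basis of $\Delta(\ell,s)$: spanning follows from $(\tau^s)^{\ell/g}=\pi\in K_g$ combined with the commutation rule (which reduces any monomial in the generators to this normal form), and linear independence follows by grouping according to the power of $\tau^s$ and invoking the $k(\pi)$-linear independence of $\{\tau^{sj}\}_{j<\ell/g}$ inside the standard basis of $\Delta$, together with the $K_g$-linear independence of $\{\omega_i\}$. Counting gives $\dim_{K_g}\Delta(\ell,s)=(\ell/g)^2$, and the symmetric argument yields $\dim_{K_g}\Delta(s,\ell)=(s/g)^2$. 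There is no genuine obstacle here; the only subtlety is keeping left and right coefficients straight in the non-commutative setting when one expands in a fixed basis. As a sanity check, $\dim_K\Delta(\ell,s)\cdot\dim_K\Delta(s,\ell)=(\ell^2/g)(s^2/g)=n^2=\dim_K\Delta$, consistent with the double centralizer theorem applied to the $K$-subalgebra $\Delta(\ell,s)\subset\Delta$.
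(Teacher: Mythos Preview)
Your proof is correct, and it takes a genuinely different route from the paper's. The paper proves (1) first by a direct commutation check, then establishes only the easy inclusion $\Delta(s,\ell)\subseteq\Cent_\Delta(\Delta(\ell,s))$ together with the lower bounds $\dim_K\Delta(\ell,s)\geq \ell^2/g$ and $\dim_K\Delta(s,\ell)\geq s^2/g$, and closes the argument by invoking the Double Centralizer Theorem: the product of the two dimensions is already $n^2=\dim_K\Delta$, so the inclusion and both inequalities must be equalities. You instead compute the reverse inclusion in (2) by hand in the standard basis, derive (1) from (2) via $Z(\Delta(\ell,s))=\Delta(\ell,s)\cap\Delta(s,\ell)$, and obtain (3) from an explicit $K_g$-basis; the Double Centralizer Theorem appears only as a consistency check. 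Your approach is more elementary and self-contained (no appeal to structure theory of central simple algebras) and yields explicit bases, while the paper's approach is shorter and avoids the coordinate computation for the hard inclusion in (2). One small notational point: when you write $c_i^{q^s}=c_i$ for $c_i\in k(\pi)$, you mean the Frobenius twist on the $k$-coefficients (fixing $\pi$), which the paper would write as $c_i^{(q^s)}$; this is harmless but worth making explicit.
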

\begin{proof} An element $\alpha=\sum_{j=0}^m a_j \tau^{j s}\in \twistt{\F_{q^\ell}}{\tau^s}$ commutes with $\tau^s$ if and only if all $a_j$ are in $\F_{q^s}\cap \F_{q^\ell}=\F_{q^g}$. Moreover, $\alpha$ commutes with all elements of $\F_{q^\ell}$ if and only if 
	$$a_j\neq 0\quad  \Longrightarrow \quad \ell \mid js. 
	$$
	Since $\ell \mid js$ is equivalent to $n\mid js$, we conclude that $Z(\Delta(\ell, s))\subseteq K_g$. The reverse inclusion is clear. This proves (1). 
	
	We have $s\cdot \ell = n\cdot g$, so 
	$$\pi=\tau^n = (\tau^s)^{\ell/g}. 
	$$
	Thus $\tau^s=\pi^{g/\ell}$ and, as a $K$-algebra, $\Delta(\ell, s)$ contains two linearly disjoint field extensions $\F_{q^\ell}K$ and $K(\pi^{g/\ell})$ of $K$ 
	of degrees $\ell$ and $\ell/g$, respectively. Thus, 
	$$
	\dim_K \Delta(\ell, s)\geq \ell^2/g. 
	$$
	Similarly, $\dim_K \Delta(s, \ell)\geq s^2/g$. It is easy to see that $\Delta(s, \ell)\subseteq \Cent_\Delta(\Delta(\ell, s))$. By the Double Centralizer Theorem, 
	$$
	\dim_K \Delta = \dim_K \Delta(\ell, s)\cdot \dim_K 	\Cent_\Delta(\Delta(\ell, s)). 
	$$
	Thus,
	$$
	n^2\geq ( \ell^2/g) ( s^2/g) = (\ell\cdot s/g)^2=n^2,
	$$
	so equalities must hold throughout. This proves (2) and (3). 
\end{proof}

\begin{prop}\label{prop3.2} Let $0\neq u\in \Delta(s, \ell)$, let $L=K(u)$, and let $D(u)=\Cent_\Delta(u)$. 
	\begin{enumerate}
		\item We have 
		$$
		\dim_L D(u) \geq (\ell/g)^2. 
		$$
		\item There is $u\in \Delta(s, \ell)$ such that $\dim_L D(u)=(\ell /g)^2$. Moreover, $$D(u)=L\otimes_{K_g} \Delta(\ell,s).$$ 
	\end{enumerate}
\end{prop}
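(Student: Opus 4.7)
For part (1), observe that $\pi=\tau^n=(\tau^\ell)^{s/g}$ lies in $\Delta(s,\ell)$, so $K\subseteq \Delta(s,\ell)$ and hence $L=K(u)\subseteq \Delta(s,\ell)$; here $L$ is a field because it is a finite-dimensional commutative domain over $K$. Applying the Double Centralizer Theorem to $L\subseteq \Delta$ (a simple subalgebra of a CSA of dimension $n^2$ over $K$) yields $[L:K]\cdot \dim_K D(u)=n^2$, so $\dim_L D(u)=(n/[L:K])^2$. It therefore suffices to show $[L:K]\leq s$. The compositum $L\cdot K_g$ is a subfield of the division algebra $\Delta(s,\ell)$ containing its center $K_g$, so $[L\cdot K_g:K_g]\leq s/g$ (the degree of $\Delta(s,\ell)$ over $K_g$ established in the preceding lemma), and hence
$$
[L:K]\leq [L\cdot K_g:K]= [L\cdot K_g:K_g]\cdot [K_g:K]\leq (s/g)\cdot g=s.
$$
Substituting $n=\ell s/g$ gives $\dim_L D(u)\geq (\ell/g)^2$.

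For part (2), the plan is to exhibit $u$ realizing the bound. Invoking the standard result that every CSA admits a maximal subfield separable over its center, choose such an $L'\subseteq \Delta(s,\ell)$ over $K_g$; since maximal subfields contain the center, $L'\supseteq K_g$ and $[L':K_g]=s/g$, whence $[L':K]=s$. The extension $K_g/K$ is separable (it comes from $\F_{q^g}/\F_q$), so $L'/K$ is separable, and the primitive element theorem produces $u\in L'$ with $K(u)=L'$. The element $u$ lies in $\Delta(s,\ell)$ and satisfies $[L:K]=s$, so part (1) applies with equality, giving $\dim_L D(u)=(\ell/g)^2$.

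To identify $D(u)$ with $L\otimes_{K_g}\Delta(\ell,s)$, consider the $K_g$-algebra map
$$
\mu\colon L\otimes_{K_g}\Delta(\ell,s)\longrightarrow \Delta, \qquad \alpha\otimes \beta \longmapsto \alpha\beta.
$$
This is a well-defined ring homomorphism because $L\subseteq \Delta(s,\ell)$ commutes elementwise with $\Delta(\ell,s)=\Cent_\Delta(\Delta(s,\ell))$ by the previous lemma. The source is a scalar extension of the CSA $\Delta(\ell,s)$ from $K_g$ to $L$, hence itself a CSA over $L$, in particular simple, so $\mu$ is injective. Its image is contained in $D(u)$ (both tensor factors centralize $u$), and both sides have $L$-dimension $(\ell/g)^2$, so $\mu$ is an isomorphism onto $D(u)$. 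The main obstacle is the separability step in part (2): one must invoke the existence of a separable maximal subfield of $\Delta(s,\ell)$ over $K_g$ in order to apply the primitive element theorem and realize the extremal $L$ as $K(u)$ for some $u\in \Delta(s,\ell)$; once this is in hand, the rest is the Double Centralizer Theorem plus a dimension count.
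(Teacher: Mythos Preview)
Your proof is correct and follows essentially the same approach as the paper's: both parts hinge on the Double Centralizer Theorem together with the bound $[L:K]\leq s$ coming from the maximal subfield degree in $\Delta(s,\ell)$, and part (2) is completed by a dimension count against $L\otimes_{K_g}\Delta(\ell,s)$. Your treatment is slightly more explicit in two places---you justify separability before invoking the Primitive Element Theorem, and you argue injectivity of the multiplication map via simplicity of the tensor product---whereas the paper handles these by a reference to a PET valid over function fields and by writing the inclusion $L\otimes_{K_g}\Delta(\ell,s)\subseteq D(u)$ directly; but the underlying argument is the same.
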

\begin{proof} Because $K$ is the center of $\Delta$, we have $D(u)=\Cent_\Delta(L)$, so by the Double Centralizer Theorem (cf. 
	\cite[Thm. 1.7.22]{PapikianGTM}) $D(u)$ is a central division algebra over $L$ with 
	$$
	n^2=[D(u):K]\cdot [L:K]. 
	$$
	Hence $\dim_L D(u)=(n/[L:K])^2$. 
	
	On the other hand, we have the inclusions $K\subseteq K_g \subset \Delta(s, \ell)$. Moreover, $[K_g:K]=g$ and $[\Delta(s, \ell):K_g]=(s/g)^2$. 
	We proved that $K_g$ is the center of $\Delta(s, \ell)$. Hence a maximal subfield $M$ of $\Delta(s, \ell)$ has degree $s/g$ over $K_g$, and  
	$[M:K]=s$. In particular, $[L:K]\leq s$. From this inequality we get 
	$$\dim_L D(u)\geq (n/s)^2=(\ell/g)^2.$$
	This proves (1). 
	
	To prove (2), fix any maximal subfield $L\subset \Delta(s, \ell)$. Since $K$ is a function field of transcendence degree $1$ over a finite field, the Primitive Element Theorem (cf. \cite[Thm. 1.5.19]{PapikianGTM}) implies the existence of $u$ such that $L=K(u)$. For this $u$, $[L:K]=s$, so $\dim_L D(u)=(\ell/g)^2$. 
	On the other hand, we obviously have 
	$$
	L\otimes_{K_g}\Delta(\ell, s)\subseteq D(u). 
	$$
	Since the dimension of $\Delta(\ell, s)$ over $K_g$ is $(\ell/g)^2$, we have $\dim_L L\otimes_{K_g}\Delta(\ell, s) = (\ell/g)^2$. Comparing the 
	dimensions  we conclude that $L\otimes_{K_g}\Delta(\ell, s)=D(u)$. 
\end{proof}

\begin{rem}
	Note that in Proposition \ref{prop3.2} we can always scale $u$ by an element of $K$ to make it integral, i.e., to lie in $\twistt{\F_{q^s}}{\tau^\ell}$, 
	without affecting the claims about the dimension of $D(u)$. 
\end{rem}

\subsection{Key technical lemma}\label{sKeyLemma}

Let $\phi\colon A\to \twist{k}$ be a Drinfeld module. 
Let $\fp\in A$ be an irreducible monic polynomial not equal to $\chr_A(\phi)$. 
Let $m_{\phi, \fp}(x)\in \F_\fp[x]$ be the minimal polynomial of $\pi$ acting on the $\F_\fp$-vector space 
$\phi[\fp]\cong \F_\fp^{\deg_\tau(\phi_T)}$. We take the norm 
$$
\Nr_{\F_\fp/\F_q}(m_{\phi, \fp}(x))= m_{\phi, \fp}(x)\cdot m_{\phi, \fp}^{(q)}(x)\cdots m_{\phi, \fp}^{(q^{\deg_T(\fp)-1})}(x)  
$$
to obtain a polynomial in $\F_q[x]$. Recall that given $0\neq u\in \End(\phi)$, we denoted 
$L=\F_q(u)$, $\widetilde{L}=\F_q(u, \pi)$, and $d_u=[\widetilde{L}:L]$.  

\begin{lem}\label{lemPrelim1}  If all 
	irreducible factors of 	$\Nr_{\F_\fp/\F_q}(m_{\phi, \fp}(x))$ in $\F_q[x]$ have degrees $> d_u$, then
	$u$ acts invertibly on $\phi[\fp]$. 
\end{lem}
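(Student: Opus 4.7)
The plan is to prove the contrapositive by building from a hypothetical nonzero kernel vector a $\pi$-invariant $\F_q$-subspace of $\phi[\fp]$ on which the minimal polynomial of $\pi$ has degree at most $d_u$; the hypothesis will then forbid such a subspace. Two previously established facts drive the argument: $u$ commutes with $\pi$ in $R$ (since $\pi$ is central), and the defining relation $m_u(\pi)=0$ holds in $\End(\phi)$.

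Assume $0\neq\alpha\in\phi[\fp]$ satisfies $u\alpha=0$, and set $V\colonequals\Span_{\F_q}\{\pi^i\alpha\mid i\geq0\}$. This subspace is $\pi$-invariant by construction, and $u|_V=0$ because $u$ commutes with $\pi$. Writing
$$m_u(x)=x^{d_u}+\sum_{i<d_u}c_i(u)\,x^{i},\qquad c_i\in\F_q[u],$$
I would apply the operator identity $m_u(\pi)=0$ to $\alpha$ and use $u\alpha=0$ together with $u\pi=\pi u$: each factor $c_i(u)$ then collapses on every vector $\pi^j\alpha\in V$ to its constant term $c_i(0)\in\F_q$. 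Hence $\overline{m}_{u,T}(\pi)\alpha=0$, where $\overline{m}_{u,T}(x)\in\F_q[x]$ is the monic polynomial of degree $d_u$ obtained by reducing $m_u(x)$ modulo $T$. Consequently, the $\F_q$-minimal polynomial $h(x)$ of $\pi$ acting on the cyclic subspace $V$ satisfies $1\leq\deg h\leq d_u$.

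To conclude I would show that $h$ divides $\Nr_{\F_\fp/\F_q}(m_{\phi,\fp}(x))$ in $\F_q[x]$, which yields a contradiction. The polynomial $g(x)\colonequals \Nr_{\F_\fp/\F_q}(m_{\phi,\fp}(x))$ lies in $\F_q[x]$ but, viewed in $\F_\fp[x]$, is divisible by $m_{\phi,\fp}(x)$; hence $g(\pi)=0$ as an operator on $\phi[\fp]$. Therefore the $\F_q$-minimal polynomial of $\pi$ on $\phi[\fp]$ divides $g$, and $h$ in turn divides this minimal polynomial since $V$ is a $\pi$-invariant subspace of $\phi[\fp]$. The hypothesis that every irreducible factor of $g$ has degree $>d_u$ then forces the same for $h$, contradicting $1\leq\deg h\leq d_u$. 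The only real subtlety in this argument is the interaction between the two natural fields of scalars $\F_\fp$ and $\F_q$: the operator $\pi$ is $\F_\fp$-linear so its finest minimal polynomial lives in $\F_\fp[x]$, whereas the subspace $V$ we produce is only an $\F_q$-subspace; the passage to the norm $\Nr_{\F_\fp/\F_q}$ is exactly the mechanism that reconciles these two scales and is what necessitates the norm in the statement.
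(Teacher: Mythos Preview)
Your proof is correct and follows essentially the same contrapositive strategy as the paper: from a hypothetical nonzero kernel vector $\alpha$ you produce a $\pi$-invariant $\F_q$-subspace of dimension at most $d_u$, forcing a small irreducible factor of $\Nr_{\F_\fp/\F_q}(m_{\phi,\fp}(x))$. The only cosmetic difference is that the paper deduces $\overline{m}_{u,T}(\pi)\alpha=0$ from the factorization $\overline{m}_{u,T}(\pi)=wu$ supplied by Lemma~\ref{lemCLM}, whereas you obtain it directly by applying $m_u(\pi)=0$ to $\alpha$ and collapsing each coefficient $c_i(u)$ to $c_i(0)$; your route has the minor advantage of only asserting that the $\F_q$-minimal polynomial of $\pi$ on $\phi[\fp]$ \emph{divides} the norm, which is all that is needed.
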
 

\begin{proof} We argue by contradiction. Suppose there is $0\neq \alpha\in \phi[\fp]$ is such that $u(\alpha)=0$. 
	By Lemma \ref{lemCLM}, we have a decomposition $\overline{m}_{u, T}(\pi)=wu$ in $R$, and $d_u=\deg_x\overline{m}_{u, T}(x)$ 
	from definitions. Thus, we have $\overline{m}_{u, T}(\pi)(\alpha)=0$.  This implies that the $\F_q$-span 
	of $\alpha, \pi(\alpha), \dots, \pi^{d_u-1}(\alpha)$ is a $\pi$-invariant subspace of $\oF_q$ of dimension $\leq d_u$. 
	
	On the other hand, the minimal polynomial of $\pi$ acting on $\phi[\fp]$,  
	as an $\F_q$-vector space, is $\Nr_{\F_\fp/\F_q}(m_{\phi, \fp}(x))$. The previous paragraph implies that 
	$\Nr_{\F_\fp/\F_q}(m_{\phi, \fp}(x))$ 
	has an irreducible factor of degree $\leq d_u$ in $\F_q[x]$, which contradicts the assumption of the lemma. 
\end{proof}

Let $m_\phi(x)$ be the minimal polynomial of $\pi\in \End(\phi)$ over $F=\F_q(\phi_T)$. Then $m_\phi(x)$ 
is a monic irreducible polynomial in $A[x]$ of degree $d_\phi\colonequals [\F_q(\pi, \phi_T):\F_q(\phi_T)]$. 
Let $\overline{m}_{\phi, \fp}(x)\in \F_\fp[x]$ be the polynomial obtained from $m_\phi(x)$ by reducing its coefficients 
modulo $\fp$.

\begin{lem}\label{cor2.2} 
	Assume  $\overline{m}_{\phi, \fp}(x)$ is irreducible in $\F_\fp[x]$ and $\overline{m}_{\phi, \fp}(0)$ generates $\F_\fp$ over $\F_q$. 
If $d_u< d_\phi\cdot \deg_T(\fp)$,  
	then $u$ acts invertibly on $\phi[\fp]$. 
\end{lem}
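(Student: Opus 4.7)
The plan is to reduce the statement to Lemma~\ref{lemPrelim1} by proving that $\Nr_{\F_\fp/\F_q}(m_{\phi,\fp}(x))$ is \emph{irreducible} in $\F_q[x]$. Since this norm has degree $d_\phi\cdot \deg_T(\fp)$, the hypothesis $d_u < d_\phi\cdot \deg_T(\fp)$ will then guarantee that its unique irreducible factor has degree strictly greater than $d_u$, which is exactly what Lemma~\ref{lemPrelim1} requires.

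The first step is to identify $m_{\phi,\fp}(x)$ with $\overline{m}_{\phi,\fp}(x)$. The relation $m_\phi(\pi)=0$ holding in $\End(\phi)$ reduces modulo $\fp$ to show that $\overline{m}_{\phi,\fp}(\pi)$ annihilates $\pi$ on $\phi[\fp]$. Hence the minimal polynomial $m_{\phi,\fp}(x)$ of this action divides the monic polynomial $\overline{m}_{\phi,\fp}(x)$, and by the irreducibility assumption the two must coincide. Writing $s=\deg_T(\fp)$, so that $\F_\fp = \F_{q^s}$, each Frobenius twist $m_{\phi,\fp}^{(q^i)}(x)$ is again irreducible of degree $d_\phi$ over $\F_\fp$, and the norm is an element of $\F_q[x]$ of degree $d_\phi\cdot s$.

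The heart of the argument is a short Galois computation in $\overline{\F}_\fp$. Let $\beta$ be a root of $m_{\phi,\fp}(x)$, so that $\F_\fp(\beta)$ has degree $d_\phi$ over $\F_\fp$ and $d_\phi\cdot s$ over $\F_q$. The full set of roots of $m_{\phi,\fp}(x)$ consists of the $\mathrm{Gal}(\overline{\F}_\fp/\F_\fp)$-conjugates $\beta, \beta^{q^s}, \ldots, \beta^{q^{s(d_\phi-1)}}$, all of which lie in $\F_q(\beta)$ because the $q$-power Frobenius stabilizes that field. Therefore every coefficient of $m_{\phi,\fp}(x)$, being an elementary symmetric function of these roots, lies in $\F_q(\beta)$; in particular so does $\overline{m}_{\phi,\fp}(0)$. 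The hypothesis that $\overline{m}_{\phi,\fp}(0)$ generates $\F_\fp$ over $\F_q$ then forces $\F_\fp \subseteq \F_q(\beta)$, hence $\F_q(\beta) = \F_\fp(\beta)$ and $[\F_q(\beta):\F_q] = d_\phi\cdot s$. The minimal polynomial of $\beta$ over $\F_q$ accordingly has degree equal to $\deg \Nr_{\F_\fp/\F_q}(m_{\phi,\fp}(x))$ and divides it, so the norm is itself that minimal polynomial and thus irreducible. Lemma~\ref{lemPrelim1} now applies and concludes the proof.

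The main subtlety I anticipate is the Galois step: specifically, recognizing that the apparently technical hypothesis on $\overline{m}_{\phi,\fp}(0)$ is precisely the lever needed to pull $\F_\fp$ inside $\F_q(\beta)$. Once that inclusion is secured, everything else—bookkeeping of degrees and invocation of Lemma~\ref{lemPrelim1}—is routine.
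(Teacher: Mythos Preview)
Your proof is correct and follows the same overall structure as the paper's: identify $m_{\phi,\fp}=\overline{m}_{\phi,\fp}$, show that $\Nr_{\F_\fp/\F_q}(m_{\phi,\fp}(x))$ is irreducible of degree $d_\phi\cdot\deg_T(\fp)$, and invoke Lemma~\ref{lemPrelim1}. The only difference is in the irreducibility step: the paper argues that any monic irreducible $\F_q$-factor $g(x)$ of the norm must coincide (over $\F_\fp$) with some $\overline{m}_{\phi,\fp}^{(q^i)}(x)$, whence $g(0)=\overline{m}_{\phi,\fp}(0)^{q^i}\in\F_q$, contradicting the generating hypothesis; you instead work with a root $\beta$ and observe that the coefficients of $\overline{m}_{\phi,\fp}$ lie in $\F_q(\beta)$, forcing $\F_\fp\subseteq\F_q(\beta)$ and hence $[\F_q(\beta):\F_q]=d_\phi\cdot\deg_T(\fp)$. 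These are dual viewpoints on the same Galois fact, and your root-based version is arguably a little cleaner.
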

\begin{proof} Note that $\overline{m}_{\phi, \fp}(\pi)=0$ on $\phi[\fp]$, so $m_{\phi, \fp}(x)$ divides $\overline{m}_{\phi, \fp}(x)$. 
	The irreducibility of this latter polynomial then implies that $m_{\phi, \fp}(x)=\overline{m}_{\phi, \fp}(x)$. 
	Hence 
	$$
	\deg_x\Nr_{\F_\fp/\F_q}(m_{\phi, \fp}(x)) = \deg_x(\overline{m}_{\phi, \fp}(x)) \cdot \deg_T(\fp)=d_\phi\cdot \deg_T(\fp). 
	$$
	
	We claim that $\Nr_{\F_\fp/\F_q}(m_{\phi, \fp}(x))$ is irreducible over $\F_q$; assuming this, the invertibility of $u$ 
	on $\phi[\fp]$ follows from Lemma \ref{lemPrelim1}.  
	Let $g(x)$ be a monic irreducible divisor of $\Nr_{\F_\fp/\F_q}(\overline{m}_{\phi, \fp}(x))$ in $\F_q[x]$. Considering $g(x)$ as a polynomial in $\F_\fp[x]$, 
	the irreducibility of all $\overline{m}_{\phi, \fp}^{(q^i)}(x)$ implies that $g(x)=\overline{m}_{\phi, \fp}^{(q^i)}(x)$ for some $0\leq i\leq \deg(\fp)-1$. 
	But then $\overline{m}_{\phi, \fp}(0)^{q^i}$ lies in $\F_q$,  which contradicts the assumption that $\overline{m}_{\phi, \fp}(0)$ generates $\F_\fp$ over $\F_q$. 
\end{proof}


\subsection{The construction of semifield codes} Recall that $\ell$ and $s$ are integers such that $n=\lcm(\ell, s)$, and we denoted 
$g=\gcd(\ell, s)$. 

Define $\phi\colon A\to \twist{k}$ by 
\begin{equation}\label{eqDefphi}
\phi_T=\sum_{i=0}^r a_i \tau^{\ell i}, \qquad a_i\in \F_{q^s}, 
\end{equation}
so that the image of $\phi$ lies in $\Delta(s, \ell)$ and $\phi$ has rank $r\ell$. 

Set 
$$
\cM=\left\{\sum_{i=0}^e b_i\tau^{s i}\mid b_i\in \F_{q^\ell}\right\}\subset \Delta(\ell, s)\subset \End(\phi). 
$$
Let $\fp\lhd A$ be a prime different from $\chr_A(\phi)$ and let $d\colonequals \deg(\fp)$. 
We want to give conditions on the prime $\fp$ and the Drinfeld module $\phi$ such that 
the image of $\cM$ in $\End_{\F_\fp}(\phi[\fp])\cong  \Mat_{r\ell}(\F_\fp)$ is a semifield code. 

Since $$\dim_{\F_q}\cM=(e+1)\ell, $$ to get a semifield code from $\cM$ we first of all need 
$
r\ell d= (e+1)\ell$; see \eqref{eqSFdim}. Thus,  
$$
\boxed{e+1=r d}
$$

\begin{lem}\label{lem1.10M} If $0\neq u\in \cM$, then $d_u\leq eg$. 
\end{lem}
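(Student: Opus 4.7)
The plan is to embed $u$ into a matrix algebra via the cyclic-algebra structure of $\Delta(\ell,s)$, bound the $\pi$-degree of the reduced characteristic polynomial of $u$, and then apply Cayley--Hamilton to exhibit $\pi$ as algebraic of low degree over $\F_q(u)$.

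First, I will use that $\Delta(\ell,s) = \F_{q^\ell}(\tau^s)$ is the cyclic central simple algebra $(\F_{q^\ell}/\F_{q^g},\sigma,\pi)$ of degree $m := \ell/g$ over its center $K_g = \F_{q^g}(\pi)$, where $\sigma = \Fr^s$ generates $\Gal(\F_{q^\ell}/\F_{q^g})$ and $y := \tau^s$ satisfies $y^m = \pi$ and $y\alpha = \sigma(\alpha)y$. The standard faithful representation
\[
\rho\colon \Delta(\ell,s)\otimes_{K_g}\F_{q^\ell}(\pi)\xrightarrow{\;\sim\;}\Mat_m\bigl(\F_{q^\ell}(\pi)\bigr)
\]
sends $\alpha\in\F_{q^\ell}$ to $\diag(\alpha,\sigma\alpha,\dots,\sigma^{m-1}\alpha)$ and $y$ to the cyclic shift matrix $Y$ with $\pi$ in the $(1,m)$-entry and $1$'s on the subdiagonal. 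Since $u$ is integral over $\F_{q^g}[\pi]$, the reduced characteristic polynomial $\chi_u(X):=\det(XI-\rho(u))$ lies in $\F_{q^g}[\pi][X]$ and is killed by $u$. I may assume $u$ has positive $\tau$-degree; the case $u\in\F_{q^\ell}^\times$ is trivial since such $u$ acts as a nonzero scalar on $\phi[\fp]$.

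Second, I will show $\deg_\pi\chi_u\leq e$ by a combinatorial argument. For $u=\sum_{i=0}^e b_i y^i$, the $(i,j)$-entry of $\rho(u)$ is the sum over $l\in\{0,\dots,e\}$ with $l\equiv i-j\pmod m$ of $b_l^{\sigma^{i-1}}\pi^{t+[i<j]}$, where $l=((i-j)\bmod m)+tm$. In the expansion of $\det(XI-\rho(u))$, each nondiagonal contribution corresponds to a permutation $\sigma$ of some subset $S\subseteq\{1,\dots,m\}$ together with a choice of $l_i$'s. Using the identity
\[
\sum_{i\in S}\bigl((i-\sigma(i))\bmod m\bigr)=m\cdot W(\sigma),\qquad W(\sigma):=|\{i\in S:\sigma(i)>i\}|,
\]
which follows from $\sum_{i\in S}(i-\sigma(i))=0$, the total $y$-degree of the product equals $m(W(\sigma)+T)$ with $T=\sum t_i$, while its $\pi$-exponent equals $W(\sigma)+T$. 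Since $\sum l_i\leq |S|\cdot e\leq me$, we conclude $W+T\leq e$, so every coefficient of $\chi_u$ has $\pi$-degree $\leq e$.

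Finally, rearranging $\chi_u(u)=0$ as a polynomial in $\pi$ with coefficients in $\F_{q^g}[u]$, the coefficient of $\pi^e$ is (up to sign) $N_{\F_{q^\ell}/\F_{q^g}}(b_{i_0})\in\F_{q^g}^\times$, where $i_0\leq e$ is the largest index with $b_{i_0}\neq 0$; this contribution comes from the unique cyclic-shift permutation attaining $W+T=e$. Hence $\pi$ is algebraic of degree $\leq e$ over $\F_{q^g}(u)$, i.e., $[\F_{q^g}(u,\pi):\F_{q^g}(u)]\leq e$. Because $u$ is transcendental over $\F_q$, the constant field of $\F_q(u)$ is $\F_q$, so $[\F_{q^g}(u):\F_q(u)]=g$, and the tower law yields
\[
d_u=[\F_q(u,\pi):\F_q(u)]\leq [\F_{q^g}(u,\pi):\F_{q^g}(u)]\cdot[\F_{q^g}(u):\F_q(u)]\leq eg.
\]

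The main obstacle is the combinatorial bound $\deg_\pi\chi_u\leq e$: once the cyclic-algebra matrix representation is in place, the wrap-counting identity for permutations carries the argument, and everything else is a standard tower-law descent from $\F_{q^g}$ to $\F_q$.
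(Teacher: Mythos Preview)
Your approach is correct and genuinely different from the paper's. The paper gives a two-line structural argument: by \cite[Thm.~4.1.3]{PapikianGTM} applied to the Drinfeld module $\psi_T=u$, one has $\dim_{\widetilde{L}}\Cent_\Delta(u)=(\deg_\tau u/d_u)^2\leq (se/d_u)^2$; on the other hand, Proposition~\ref{prop3.2} (with the roles of $\ell$ and $s$ exchanged) gives $\dim_{\widetilde{L}}\Cent_\Delta(u)\geq(s/g)^2$ since $u\in\Delta(\ell,s)$, and combining these yields $d_u\leq eg$. Your argument instead works entirely inside the cyclic algebra $\Delta(\ell,s)$: you write down the explicit splitting $\rho$, bound $\deg_\pi\chi_u\leq e$ by wrap-counting combinatorics, and then apply Cayley--Hamilton plus a tower-law descent from $\F_{q^g}$ to $\F_q$. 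This is more hands-on and avoids invoking the Double Centralizer Theorem and the structure theorem for endomorphism algebras of Drinfeld modules over finite fields, at the cost of a longer computation. Both routes ultimately exploit that $u$ has bounded degree in $y=\tau^s$ together with $y^{\ell/g}=\pi$.

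One small slip: when $i_0<e$, the coefficient of $\pi^e$ in $\chi_u(u)$ is zero, not $\Nr_{\F_{q^\ell}/\F_{q^g}}(b_{i_0})$; your own bound gives $\sum l_i\leq m\, i_0$, so no term reaches $W+T=e$. What you have actually computed is the coefficient of $\pi^{i_0}$, which is indeed $\pm\Nr_{\F_{q^\ell}/\F_{q^g}}(b_{i_0})\neq 0$. This does not affect the conclusion, and in fact you do not need this step at all: the relation $\chi_u(u)=0$, viewed as a polynomial in $\pi$ of degree $\leq e$ over $\F_{q^g}[u]$, is already nonzero because its $\pi^0$-coefficient is $u^m+(\text{lower order in }u)$, which cannot vanish for transcendental $u$.
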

\begin{proof} 
	Define a Drinfeld module $\psi$ over $k$ by setting $\psi_T=u$. Applying Theorem 4.1.3 in \cite{PapikianGTM} to $\psi$, we deduce that  
	$\widetilde{L}$ is the center of $D=\Cent_\Delta(u)$ and 
	$$
	\dim_{\widetilde{L}}(D)=\left(\frac{\deg_\tau(u)}{d_u}\right)^2 \leq \left(\frac{se}{d_u}\right)^2. 
	$$
	On the other hand, since $u\in \Delta(\ell, s)$, Proposition \ref{prop3.2} implies that 
	$
	\dim_{\widetilde{L}}(D) \geq (s/g)^2$. 
	Thus, $d_u\leq eg$. 
\end{proof}

\begin{assumptions}
Let $\tF=\F_q(\phi_T, \pi)$ and $F=\F_q(\phi_T)$. Suppose 
\begin{enumerate}
	\item $\overline{m}_{\phi, \fp}(x)$ is irreducible in $\F_\fp[x]$.
	\item $\overline{m}_{\phi, \fp}(0)$ generates $\F_\fp$ over $\F_q$. 
	\item $eg<d\cdot [\tF:F]$. 
\end{enumerate}
Then, by  Lemma \ref{cor2.2} and Lemma \ref{lem1.10M}, any $0\neq u\in \cM$ acts invertibly on $\phi[\fp]$. 
\end{assumptions}

First, we examine the implication of assumption (3). Applying Lemma \ref{lem1.10M} to $\phi$ leads to 
\begin{equation}\label{eqtF1}
	[\tF:F]\leq rg. 
\end{equation}
On the other hand, from $eg<d[\tF:F]$ and $e+1=rd$, we get 
\begin{equation}\label{eqtF2}
	eg<d[\tF:F] = \frac{(e+1)}{r}[\tF:F]. 
\end{equation}
Combining \eqref{eqtF1} and  \eqref{eqtF2}, we get 
$$
\frac{e}{e+1} rg < [\tF:F]\leq rg. 
$$
We want to allow $d$ (and thus $e$) to be arbitrarily large, so we are forced to assume that 
$$
\boxed{ [\tF:F]=rg}
$$
By \eqref{eqtF1}, this is the maximal possible degree for the extension $\tF/F$. On the other hand, by 
Proposition \ref{prop3.2}, one can choose $\phi_T\in \Delta(s, \ell)$,  
so that this equality holds. We assume that $\phi$ is chosen to have this property. 

\begin{rem} Let $\fl=\chr_A(\phi)$. 
	The \textit{height} of $\phi$ is defined as $H(\phi)=\Ht(\phi_\fl)/\deg(\fl)$. By \cite[Lem. 3.2.11]{PapikianGTM}, $H(\phi)$ 
	is a positive integer. Now note that $\phi$ 
	defined by \eqref{eqDefphi} can be considered as a $\F_{q^\ell}[T]$-Drinfeld module $\phi'$ of rank $r$. 
	Since $H(\phi)=\ell H(\phi')$, we conclude that $\ell\leq H(\phi)$. On the the other hand, by \cite[Prop.  4.1.8]{PapikianGTM}, 
	$H(\phi)\geq r\ell/[\tF:F]$. Thus, if $H(\phi)=\ell$, then $r\leq [\tF:F]\leq rg$. This means that when $g=1$, without appealing to Proposition \ref{prop3.2}, 
	we can choose $\phi$ for which $[\tF:F]=r$ is maximal by choosing $\phi'$ to be \textit{ordinary}, i.e., having $H(\phi')=1$.  Since ``most" Drinfeld modules over finite fields are ordinary, such a choice is always possible. 
\end{rem}

It remains to show that there are primes $\fp$ for which assumptions (1) and (2) hold. 

\begin{lem} Assume $g=1$ and $r$ is not divisible by the characteristic of $\F_q$. In addition, assume that either $r$ is prime, or 
that $r$ is coprime to $s$. Then the Galois group of $m_\phi(x)$ contains a cycle of maximal length $r$. 
\end{lem}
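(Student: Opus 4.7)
The plan is to handle the two disjuncts of the hypothesis separately.

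If $r$ is prime, the argument is immediate: since $m_\phi(x)$ is by definition irreducible of degree $r$, the Galois group $G$ acts transitively on the $r$ roots, whence $r \mid |G|$; by Cauchy's theorem $G$ contains an element of order $r$, and any such element of $S_r$ with $r$ prime must be an $r$-cycle. In this case the characteristic hypothesis and $g=1$ are not actually used.

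The substantive case is $\gcd(r,s)=1$. Here I would extract the $r$-cycle as a tame inertia generator at the infinite place $v_\infty$ of $F=\F_q(\phi_T)$ (the one corresponding to the pole of $\phi_T$). The first step is to determine the Newton polygon of $m_\phi(x)$ at $v_\infty$. Appealing to the ``supersingularity at infinity'' property of Drinfeld modules over finite fields (cf.\ \cite[Chap.~4]{PapikianGTM}), all roots of the characteristic polynomial $P_\phi(x)\in A[x]$ of $\pi$ share the same $v_\infty$-valuation, equal to $-n/(r\ell)=-s/r$ (using $g=1$, so $n=\ell s$). Since $P_\phi(x)=m_\phi(x)^\ell$ by \eqref{eqPandM}, the Newton polygon of $m_\phi$ at $v_\infty$ consists of a single segment of slope $-s/r$. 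The assumption $\gcd(r,s)=1$ puts this slope in lowest terms, so the Newton polygon irreducibility criterion yields that $m_\phi$ is irreducible over the completion $F_\infty$ and, for any root $\alpha$, the local extension $F_\infty(\alpha)/F_\infty$ is totally ramified of degree $r$.

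The characteristic assumption then kicks in: since $r$ is coprime to the characteristic of $\F_q$ (which is the residue characteristic at $v_\infty$), this totally ramified extension is tamely ramified. Hence $F_\infty(\alpha)=F_\infty(\varpi^{1/r})$ for some uniformizer $\varpi$, its Galois closure over $F_\infty$ is $F_\infty(\varpi^{1/r},\zeta_r)$ for a primitive $r$-th root of unity $\zeta_r$, and the tame inertia subgroup is cyclic of order $r$, generated by $\varpi^{1/r}\mapsto \zeta_r\varpi^{1/r}$. This generator permutes the $r$ roots of $m_\phi$ in the Galois closure cyclically, i.e., acts as a single $r$-cycle. Identifying the decomposition group at a prime of the global splitting field of $m_\phi$ above $v_\infty$ with the corresponding piece of the local Galois group then realizes this $r$-cycle inside the global Galois group, as required. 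The main technical input, and essentially the only place where the specific theory of Drinfeld modules (rather than generic Newton polygon formalism) enters, is the single-slope assertion for $P_\phi$ at $v_\infty$; if the reference does not state it in the exact form needed, I would re-derive it from the fact that the local invariant of $\Delta$ at $\infty$ is $-1/n$, which forces $F_\infty$ to be ``non-split'' in $D=\End(\phi)\otimes_A F$ in a way that prevents the Newton polygon from breaking.
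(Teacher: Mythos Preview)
Your argument is correct. For the case $\gcd(r,s)=1$, your approach via the Newton polygon at the place $\infty$ is essentially identical to the paper's: both use the common valuation $-s/r$ of the roots (the paper cites \cite[Thm.~4.2.7]{PapikianGTM} for this) to obtain a totally tamely ramified local extension of degree $r$, and then exhibit the $r$-cycle as a generator of tame inertia, $\varpi^{1/r}\mapsto \zeta_r\varpi^{1/r}$.

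For the case $r$ prime, your route is genuinely different and more elementary. You use only the global irreducibility of $m_\phi$, transitivity on roots, and Cauchy's theorem in $S_r$. The paper instead continues with the local analysis: it invokes \cite[Thm.~4.1.3]{PapikianGTM} to get irreducibility of $m_\phi$ over $F_\infty$, and then splits according to whether the local degree-$r$ extension $\tF_{\tinf}/F_\infty$ is unramified (Galois group $\Z/r\Z$, done) or totally ramified (reduce to the tame-inertia argument above). Your shortcut avoids this machinery entirely. One small correction, though: contrary to your remark, both extra hypotheses are used in your prime-case argument. The assumption $g=1$, combined with the standing hypothesis $[\tF:F]=rg$, is precisely what gives $\deg m_\phi = r$; and the assumption that $r$ is prime to the characteristic is what guarantees that the degree-$r$ irreducible polynomial $m_\phi$ is separable, so that there really are $r$ distinct roots on which the Galois group acts.
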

\begin{proof}
	Let $M$ be the splitting field of $m_\phi(x)$. Since $r=\deg m_\phi(x)$ is not divisible by the characteristic of $F$, the extension $M/F$ is Galois. 
	We claim that $\Gal(M/F)$, as a subgroup of the permutation group $S_r$ of the roots of $m_\phi(x)$, contains a cycle of length $r$. 
	We will show that the decomposition subgroup $G_\infty\subseteq \Gal(M/F)$ at the place $\infty=1/T$ contains such a cycle. 
	
	By \cite[Thm. 4.1.3]{PapikianGTM}, $m_\phi(x)$ remains irreducible over the completion $\Fi$ of $F$. Let 
	$M'$ be the splitting field of $m_\phi(x)$ over $\Fi$. Then $G_\infty\cong \Gal(M'/\Fi)$. 
	Let $\tinf$ be the unique place of $\tF$ over $\infty$, and let $\tF_{\tinf}$ be the completion of $\tF$ at $\tinf$. We have 
	$r=[\tF_{\tinf}:\Fi]=e(\tF_{\tinf}/\Fi) f(\tF_{\tinf}/\Fi)$, where $e(\tF_{\tinf}/\Fi) $ is the ramification index and $f(\tF_{\tinf}/\Fi)$ 
	is the residual degree. 
	
	If $r$ is a prime, then either  $e(\tF_{\tinf}/\Fi)=r$ or $f(\tF_{\tinf}/\Fi)=r$. In the second case, $M'=\tF_{\tinf}$ is an unramified extension of degree $r$, so $\Gal(M'/\Fi)\cong \Z/r\Z$. In the first case, $\tF_{\tinf}/\Fi$ is a totally tamely ramified extension. 
	If $r$ is not necessarily prime, we note that the  normalized 
	valuations of the roots of $m_\phi(x)$ at $\infty$ are equal to $-n/r\ell=-s/r$; see \cite[Thm. 4.2.7]{PapikianGTM}. 
	Therefore, if $r$ is coprime to $s$, then again this implies that $\tF_{\tinf}/\Fi$ is a totally tamely ramified extension. 
	
	We are reducing to showing that if $\tF_{\tinf}/\Fi$ is a totally tamely ramified extension of degree $r$, 
	then $\Gal(M'/\Fi)$ contains a cycle of length $r$. By a well-known structure theorem for totally tamely ramified extensions 
	of local fields (cf. \cite[Prop. 2.6.7]{PapikianGTM}), $M'$ is the splitting field of $x^r-\varpi_\infty$, where $\varpi_\infty$ 
	is a uniformizer of $\Fi$. Therefore, $M'$ contains a primitive $r$-th root $\zeta_r$ of $1$, and the automorpism $M'\to M'$ defined by 
	$\varpi_\infty^{1/r}\mapsto \zeta_r\varpi_\infty^{1/r}$ is a cycle of length $r$. 
\end{proof}

Assume the Galois group of $m_\phi(x)$ contains a cycle of length $rg$ (this is always the case under the assumptions on $rg$ of the previous lemma). 
By the effective Chebotarev density theorem (cf. \cite{MuSch}), 
the number of primes in $A$ of degree $N\gg 0$, which are unramified in the Galois closure of $\tF/F$ and whose corresponding Frobenius is in the conjugacy class of a maximal cycle in $\Gal(m_\phi(x))$, is $\geq cq^N/N$ for some nonzero constant $c$ (not depending on $N$). Denote the set of such primes by $\cP_N$. 

For $\fp\in \cP_N$ the reduction of $m_\phi(x)$ modulo $\fp$ is irreducible (i.e., (1) holds), so we now concentrate on condition (2).  
Let $\fq=\chr_A(\phi)$. By \cite[Thm. 4.2.2]{PapikianGTM}, $P_\phi(x)=m_\phi(x)^{r\ell/rg}=m_\phi(x)^{\ell/g}$. Moreover, by 
\cite[Thm. 4.2.7]{PapikianGTM}, up to an $\F_q^\times$-multiple, $P_\phi(0)$ is equal to $\fq^{n/\deg(\fq)}$. Hence, up to an $\F_q^\times$-multiple, 
$m_\phi(0)$ is equal to $\fq^{s/\deg(\fq)}$. 

Denote $f(T)=\fq^{s/\deg(\fq)}\in A$. Note that $\deg_T f=s$. Choose a root of each $\fp\in \cP_N$ in $\overline{\F}_q$ and call the set of these roots $\cR_N$. We have 
$\# \cR_N\geq  cq^N/N$. We want to show that $f(T)$ modulo $\fp$ generates $\F_\fp=\F_{q^N}$ over $\F_q$ for at least one $\fp\in \cP_N$. 
This is equivalent to requiring that for some $\alpha\in \cR_N$, the value $f(\alpha)$ 
does not belong to a proper subfield of $\F_{q^N}$.
The image of the map $\cR_N\to \F_{q^N}$, $\alpha\mapsto f(\alpha)$, has size at least $\# \cR_N/\deg(f)$ since the preimages of this map for $\beta\in \F_{q^N}$ 
are the roots of $f(x)-\beta$, and there at most $\deg(f)$ such roots. Finally, note that the union of proper subfields of $\F_{q^N}$ has size at most $O(q^{N/2})$. 
Since 
$$
c\frac{q^N}{N\cdot s} > O(q^{N/2})
$$ 
when $N$ is sufficiently large, we see that we can always find $\fp$ such that the properties (1) and (2) are satisfied. 

 \subsection{The nuclear parameters of the code} As in the previous subsection, let  $r\geq 1$, $d=\deg \fp$, 
\[
\phi_T=\sum_{i=0}^r a_i \tau^{\ell i} \in \twistt{\F_{q^s}}{\tau^\ell},
\]
and 
\begin{equation}\label{eqcM}
\cM=\left\{\sum_{i=0}^{rd-1} b_i\tau^{s i}\mid b_i\in \F_{q^\ell}\right\}. 
\end{equation}
Assume all nonzero elements of $\cM$ act invertibly on $\phi[\fp]$, so that $\cM$ gives a semifield code in $\End_{\F_\fp}(\phi[\fp])$. 

\begin{defn}\label{defNP}
Define \textit{left (right) idealizer, centralizer, and center} of $\cM$ as follows:  
\begin{align*}
	I_l(\cM) &=\{f\in \End_{\F_\fp}(\phi[\fp])\mid f\cM\subseteq \cM\},\\ 
	I_r(\cM) &=\{f\in \End_{\F_\fp}(\phi[\fp])\mid \cM f\subseteq \cM\},\\ 
	C(\cM) &=\{f\in \End_{\F_\fp}(\phi[\fp])\mid mf=fm \text{ for all }m\in \cM\},\\ 
	Z(\cM) &=I_l(\cM)\cap C(\cM). 
\end{align*}
By \cite[p. 436]{Sheekey}, each of these sets is a field extension of $\F_q$. 
The \textit{nuclear parameters} of $\cM$ is the tuple
\[
(\dim_{\F_q}\cM, \dim_{\F_q} 	I_l(\cM),  \dim_{\F_q} 	I_r(\cM),  \dim_{\F_q} 	C(\cM),  \dim_{\F_q} 	Z(\cM)). 
\]
By \cite[Prop. 4]{Sheekey}, equivalent codes have the same nuclear parameters, so this tuple gives an 
equivalence invariant of the code. 
\end{defn}

The ``expected" parameters of $\cM$ are 
\[
\boxed{(rd\ell, \ell, \ell, rdg, g)}
\]
More precisely, it is clear that $\F_{q^\ell}\subseteq I_l(\cM)$, $\F_{q^\ell}\subseteq I_r(\cM)$, and $\F_{q^g}\subseteq Z(\cM)$. 
The centralizer contains the image of $\F_{q}[\phi_T, \pi]$ in $\End_{\F_\fp}(\phi[\fp])$. 
Recall that we are assuming that $\deg m_\phi(x)=rg$ and $m_\phi(x)$ 
is irreducible modulo $\fp$. Under these assumptions, we have 
\[\F_{q}[T, \pi]/\fp\cong \F_\fp[x]/(\overline{m}_{\phi, \fp}(x))\cong \F_{\fp^{rg}}\cong \F_{q^{drg}}. 
\]
(Note that the image of $\F_{q}[\phi_T, \pi]$ in $\End_{\F_\fp}(\phi[\fp])$ already contains  $\F_{q^g}\subseteq Z(\cM)\subseteq C(\cM)$.) 
The expectation is that there are no sporadic elements in the idealizers and the centralizer, which in principle might occur because we are working modulo $\fp$. In any case, we have the following:

\begin{lem} If $s< \ell$, then 
\[ 
	I_l(\cM)=I_r(\cM)=\F_{q^\ell}, \qquad Z(\cM)=\F_{q^g}. 
\]
\end{lem}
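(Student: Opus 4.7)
The plan is to prove each equality by establishing two inclusions, with the easy ones following from direct verification. For $\F_{q^\ell} \subseteq I_l(\cM) \cap I_r(\cM)$, the embedding $\F_{q^\ell} \hookrightarrow \End(\phi)$ passes to $\End(\phi[\fp])$, and the identity $\tau^{si} c = c^{q^{si}} \tau^{si}$ together with $c^{q^{si}} \in \F_{q^\ell}$ shows that $\cM$ is closed under left and right multiplication by $\F_{q^\ell}$. For $\F_{q^g} \subseteq Z(\cM)$, I use $\F_{q^g} = \F_{q^\ell}\cap\F_{q^s}$ and observe that elements of $\F_{q^s}$ satisfy $c^{q^{si}}=c$, hence commute with each $\tau^{si}$.

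For the reverse inclusion $I_l(\cM) \subseteq \F_{q^\ell}$, note that since $1 \in \cM$, every $f \in I_l(\cM)$ already lies in $\cM$; by injectivity of the semifield code $\cM \hookrightarrow \End(\phi[\fp])$, $f$ admits a unique lift $f = \sum_{i=0}^{rd-1} c_i \tau^{si}\in\End(\phi)$ with $c_i \in \F_{q^\ell}$. The main step is to show by descending induction on $k\in\{rd-1,\ldots,1\}$ that $c_k=0$, leaving $f=c_0\in\F_{q^\ell}$. Assuming $c_{rd-1}=\cdots=c_{k+1}=0$, the hypothesis $f\cdot\tau^{(rd-k)s}\in\cM$ (valid because $\tau^{(rd-k)s}\in\cM$ for $k\geq 1$) lifts to
\[
f\,\tau^{(rd-k)s}-m\;\in\;\ker\!\bigl(\End(\phi)\to\End(\phi[\fp])\bigr)
\]
for some $m\in\cM$, where in $R$ the left-hand side has $\tau$-degree at most $rds$ with the $\tau^{rds}$-coefficient equal to $c_k$.

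The crux is to identify this kernel with the two-sided ideal $\phi_\fp\End(\phi)$: vanishing on $\phi[\fp]$ forces right-divisibility by $\phi_\fp$ in $R$ via \cite[Lem.~3.1.16]{PapikianGTM}, and then the centrality of $\phi_\fp$ in $\End(\phi)$ combined with $R$ being a domain allows one to check the quotient lies in $\End(\phi)$. Every nonzero element of $\phi_\fp\End(\phi)$ has $\tau$-degree at least $\deg_\tau(\phi_\fp)=r\ell d$, and the hypothesis $s<\ell$ gives $rds<r\ell d$; hence both sides of the displayed equation must vanish in $R$, and comparing the coefficient of the linearly independent monomial $\tau^{rds}$ forces $c_k=0$. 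The inclusion $I_r(\cM)\subseteq\F_{q^\ell}$ is proved by the symmetric argument applied to $\tau^{(rd-k)s}f$, using Frobenius-stability of $\F_{q^\ell}$. Finally, $Z(\cM)\subseteq I_l(\cM)=\F_{q^\ell}$, and the commutation $c\tau^s=\tau^s c=c^{q^s}\tau^s$ forces $c=c^{q^s}$, so $Z(\cM)\subseteq\F_{q^\ell}\cap\F_{q^s}=\F_{q^g}$, matching the lower bound. The main obstacle throughout is the degree-based rigidity argument, which relies precisely on the hypothesis $s<\ell$ to make $rds$ strictly smaller than $\deg_\tau(\phi_\fp)$.
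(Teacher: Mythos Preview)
Your proof is correct and follows essentially the same approach as the paper: both arguments exploit the key inequality $srd < r\ell d = \deg_\tau(\phi_\fp)$ (which is exactly where $s<\ell$ enters) to show that multiplying a would-be positive-degree element $f\in I_l(\cM)$ by a suitable $\tau^{js}\in\cM$ produces an element whose $\tau$-degree is too large to lie in $\cM$ yet too small to lie in the kernel $\phi_\fp\End(\phi)$. Your descending induction and explicit identification of the kernel add detail, but the paper's proof achieves the same conclusion in a single step by taking $j = rd - w/s$ where $w=\deg_\tau f$.
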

\begin{proof} Since $1\in \cM$, if $f\in I_l(\cM)$, then we can assume that $f\in \cM$. To prove that 
	$I_l(\cM)=\F_{q^\ell}$, it is enough to prove that $f$ cannot have positive degree in $\tau$. Let $w=\deg_\tau(f)$. 
	Note that $w\leq (rd-1)s$ and $s\mid w$.  
	If $w>0$, then $\tau^{srd-w}\in \cM$ but $f \tau^{srd-w}\not\in \cM$, since $srd<\ell rd=\deg_\tau(\phi_\fp)$. This 
	leads to a contradiction. The argument for $I_r(\cM)$ is similar. 
	The elements of $\F_{q^\ell}$ that commute with all elements of $\cM$ are those in $\F_{q^g}$, so $Z(\cM)=\F_{q^g}$. 
\end{proof}

\begin{lem}\label{lemCentralizer}
    Assume $m_\phi(x)$ is irreducible modulo $\fp$, $g=1$, and $\ell$ is prime. Then $C(\cM)\cong \F_{q^{rd}}$. 
\end{lem}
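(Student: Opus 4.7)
The plan is to identify the subalgebra $B\subseteq\End(\phi[\fp])$ generated by $\iota_\fp(\cM)$ with the matrix algebra $\Mat_\ell(\F_{q^{rd}})$, so that computing $C(\cM)=C(B)$ reduces to applying Schur's lemma to a standard module. Since every element of $\End(\phi[\fp])$ is already $\F_\fp$-linear, $C(\cM)$ coincides with the centralizer of the $\F_\fp$-subalgebra generated by $\iota_\fp(\cM)$; denote this algebra by $B$. As $\cM\supseteq \F_{q^\ell}$ and $\tau^s\in\cM$, the algebra $B$ is generated over $\F_\fp$ by $\F_{q^\ell}$ and $\tilde\tau:=\iota_\fp(\tau^s)$, and in particular $\tilde\pi:=\iota_\fp(\pi)=\tilde\tau^\ell$ lies in $B$.

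The hypothesis that $m_\phi(x)$ is irreducible modulo $\fp$ (together with $g=1$, which gives $\deg m_\phi=r$) says that the minimal polynomial of $\tilde\pi$ over $\F_\fp$ is $\overline{m}_{\phi,\fp}(x)$, irreducible of degree $r$; hence $\F_\fp[\tilde\pi]\cong\F_{q^{rd}}$ sits inside $B$. Because $\pi$ lies in the center of $R$, both $\F_{q^\ell}$ and $\tilde\tau$ commute with $\tilde\pi$, so $\F_{q^{rd}}\subseteq Z(B)$.

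The key step is to exhibit $B$ as a cyclic algebra of dimension $\ell^2$ over $\F_{q^{rd}}$. First I check that $\gcd(\ell,rd)=1$: if instead $\ell\mid rd$, then $\F_{q^\ell}\subseteq \F_{q^{rd}}\subseteq Z(B)$, which would force $\tilde\tau$ to commute with $\F_{q^\ell}$; but the relation $\tau^s a=a^{q^s}\tau^s$ in $R$ shows conjugation by $\tilde\tau$ acts on $\F_{q^\ell}$ as the $q^s$-Frobenius, which is trivial only if $\ell\mid s$, contradicting $g=\gcd(\ell,s)=1$ together with $\ell\ge 2$. Since $\ell$ is prime this rules out $\ell\mid rd$, hence $\gcd(\ell,rd)=1$ and $\F_{q^{\ell rd}}:=\F_{q^\ell}\cdot\F_{q^{rd}}$ is a degree-$\ell$ field extension of $\F_{q^{rd}}$ inside $B$. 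Conjugation by $\tilde\tau$ restricts to an $\F_{q^{rd}}$-automorphism $\sigma$ of $\F_{q^{\ell rd}}$ of exact order $\ell$, and $\tilde\tau^\ell=\tilde\pi\in\F_{q^{rd}}^\times$. Therefore $B$ is a homomorphic image of the cyclic algebra $A_0=(\F_{q^{\ell rd}}/\F_{q^{rd}},\sigma,\tilde\pi)$, a central simple $\F_{q^{rd}}$-algebra of dimension $\ell^2$; being over a finite field $A_0$ is split, so Wedderburn gives $A_0\cong\Mat_\ell(\F_{q^{rd}})$, and the simplicity of $A_0$ together with $B\ne 0$ forces $B\cong\Mat_\ell(\F_{q^{rd}})$.

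The rest is formal: $\phi[\fp]$ is a faithful $B$-module of $\F_{q^{rd}}$-dimension $r\ell d/(rd)=\ell$, so it must be the unique simple $\Mat_\ell(\F_{q^{rd}})$-module, and Schur's lemma yields
\[
C(\cM)=C(B)=\End_B(\phi[\fp])\cong\F_{q^{rd}}.
\]
The main obstacle is the structural identification $B\cong\Mat_\ell(\F_{q^{rd}})$, and within it the verification that $\gcd(\ell,rd)=1$—precisely the step where the hypotheses that $\ell$ is prime and $g=1$ are used in an essential way.
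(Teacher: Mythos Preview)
Your argument contains a genuine error in the ``key step'': the claim that $\gcd(\ell,rd)=1$ is false under the lemma's hypotheses. The paper's own worked example has $q=3$, $\ell=2$, $s=3$, $r=2$, and $\fp=T-1$ (so $d=1$), giving $\gcd(\ell,rd)=\gcd(2,2)=2$; yet $m_\phi(x)$ is irreducible modulo $\fp$, $g=1$, $\ell$ is prime, and the conclusion $C(\cM)\cong\F_{q^{rd}}$ holds. So the coprimality you assert simply does not follow.

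The flaw is in the implication ``$\ell\mid rd \Rightarrow \F_{q^\ell}\subseteq \F_{q^{rd}}\subseteq Z(B)$''. Here $\F_{q^\ell}$ means the specific subfield $\iota_\fp(\F_{q^\ell})\subseteq B$, while $\F_{q^{rd}}$ means the specific subfield $\F_\fp[\tilde\pi]\subseteq B$. The divisibility $\ell\mid rd$ only gives an \emph{abstract} embedding $\F_{q^\ell}\hookrightarrow\F_{q^{rd}}$; it does not force these two particular copies inside the noncommutative algebra $B$ to be nested. Indeed, in the example above one checks directly that $\tilde\tau$ does \emph{not} commute with $\iota_\fp(\F_{q^2})$ (since $\tilde\tau\,\iota_\fp(a)=\iota_\fp(a^{q^3})\,\tilde\tau$ and $\iota_\fp$ is injective on $k$), so $\iota_\fp(\F_{q^2})\not\subseteq Z(B)$ and hence $\iota_\fp(\F_{q^2})\neq\F_\fp[\tilde\pi]$. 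Once $\gcd(\ell,rd)=1$ fails, your cyclic-algebra identification $B\cong\Mat_\ell(\F_{q^{rd}})$ no longer goes through as written, because the compositum $\F_{q^{rd}}\cdot\iota_\fp(\F_{q^\ell})$ need not be a field of degree $\ell$ over $\F_{q^{rd}}$.

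The paper's proof sidesteps this entirely: it argues by contradiction that if $C(\cM)\supsetneq\F_{q^{rd}}$ then (using that $\ell$ is prime and that $[C(\cM):\F_q]$ divides $\dim_{\F_q}\phi[\fp]=rd\ell$) one must have $C(\cM)\cong\F_{q^{rd\ell}}$; the Double Centralizer Theorem then forces $C(C(\cM))=C(\cM)$, whence $\cM\subseteq C(\cM)$ would make the image of $\cM$ commutative, contradicting the non-commutation of $\tilde\tau$ with $\iota_\fp(\F_{q^\ell})$. Your strategy of computing $B$ explicitly is appealing and might be salvageable (e.g.\ by proving directly that $\phi[\fp]$ is simple as a $B$-module), but it needs a different argument for the structure of $B$ that does not rely on the false coprimality.
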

\begin{proof}
    We have already seen above that $\F_{q^{rd}}\subseteq C(\cM)$. On the other hand, $C(\cM)$ is a field acting on \[\phi[\fp]\cong \F_{\fp}^{r\ell}\cong \F_q^{rd\ell},\] 
    i.e., $\phi[\fp]$ is a vector space over $C(\cM)$, so $C(\cM)\subseteq \F_{q^{rd\ell}}$. If $C(\cM)$ is strictly larger than 
    $\F_{q^{rd}}$ and $\ell$ is prime, then $C(\cM)\cong \F_{q^{rd\ell}}$. 
    Denote the centralizer of $C(\cM)$ in $\End_{\F_\fp}(\phi[\fp])$ by $C(C(\cM))$. By 
    the Double Centralizer Theorem applied to $D=\Mat_{r\ell}(\F_\fp)$ 
    (see \cite[Cor. 7.13]{Reiner}), we have 
    \[
    [D:\F_\fp] = [C(\cM):\F_\fp] \cdot [C(C(\cM)):\F_\fp]. 
    \]
    This implies that $[C(C(\cM)):\F_\fp]=r\ell$. On the other hand, we have $C(\cM)\subseteq C(C(\cM))$, so $C(\cM)= C(C(\cM))$. 
    Note that $\cM\subseteq C(C(\cM))$. Since $\dim \cM = rd\ell = \dim C(\cM)$, we conclude that the image of $\cM$ in $\End_{\F_\fp}(\phi[\fp])$ is isomorphic to $\F_{q^{rd\ell}}$; in particular, it is commutative. But $\tau^\ell$ and $\F_{q^s}$ do not commute modulo $\phi_\fp$, so we get a contradiction. Therefore, $C(\cM)\cong \F_{q^{rd}}$.
\end{proof}

\begin{example} The calculations in this example were done using  \texttt{Magma} \cite{MR1484478}. 
	
	Let $q=3$, $\ell=2$ and $s=3$, so $n=6$. Let $\alpha$ be a root of $x^3-x+1$, which is irreducible in $\F_q[x]$.  Define 
	$$
	\phi_T=\alpha+\alpha^2\tau^2+\tau^4. 
	$$
	The minimal polynomial of $\pi=\tau^6$ over $\F_q(\phi_T)$ is 
	$$
	x^2-Tx-(T^3-T+1). 
	$$
	This polynomial modulo $\fp=T-1$ is $x^2+x-1$, which is irreducible over $\F_q$.
	
	It is not hard to check that 
	$$
	\cM=\{b_0+b_1\tau^3\mid b_0, b_1\in \F_{q^2}\}
	$$
	gives a semifield code in $\End_{\F_{T-1}}(\phi[T-1])\cong \Mat_4(\F_q)$. (For this, one just needs to check that all nonzero $b_0+b_1\tau^3$ 
	are coprime to $\phi_{T-1}=(\alpha-1)+\alpha^2\tau^2+\tau^4$ in $\twist{k}$, which is easy to do using the division algorithm; cf. \cite[Thm. 3.1.13]{PapikianGTM}.) Next, one verifies that no element $f$ of $\cM$ of positive degree satisfies \[f\cM\pmod{\phi_{T-1}}\subseteq \cM.\]
	Thus, $I_l(\cM)=\F_{q^2}$, and similarly $I_r(\cM)=\F_{q^2}$. 
    By Lemma \ref{lemCentralizer}, $C(\cM)=\F_{q^2}$.
	Finally, the elements of $I_l(\cM)$ that commute with all other elements of $\cM$ are the elements of $\F_q$. Thus, $Z(\cM)=\F_q$. We conclude that 
	the parameters of our semifield code are 
	\[
	(4,2,2,2,1). 
	\] 
\end{example}

 
 \subsection{Centralizers of matrices over extension fields} The torsion module $\phi[\fp]$ is most naturally an $\F_\fp$-vector space, as 
 it reflects the $A$-module structure given by $\phi$. On the other hand, $\phi[\fp]$ can also be considered as an $\F_q$-vector space. Then, similar 
 to Definition \ref{defNP},  
 one can define the idealizers and the centralizer of $\cM$ in $\End_{\F_q}(\phi[\fp])$. In fact, if one follows Definition 2 in \cite{Sheekey}, 
 then this is how these invariants should be defined in our context. 
 
 Fix an embedding $\iota\colon \F_\fp\hookrightarrow \Mat_d(\F_q)$, for example the regular representation. This induces an embedding 
 \[
 \iota\colon \Mat_r(\F_\fp) \longhookrightarrow \Mat_{rd}(\F_q)
 \]
 by applying $\iota$ entry-wise (or equivalently, by viewing $\F_\fp^r$ as an $\F_q$-vector space of dimension $rd$). 
 For our message space $\cM$, we wish to compare $\cI_\ell(\iota(\cM))$, $\cI_r(\iota(\cM))$, and 
 $C(\iota(\cM))$ with $\cI_\ell(\cM)$, $\cI_r(\cM)$, and $C(\cM)$, respectively.  Because, $1\in \cM$, it is easy to see that the left and right 
 idealizers do not change. The situation with the centralizer is less clear, 
 so we first obtain a general criterion that can be applied to answer the question. 
 
 For a matrix $M\in \Mat_r(\F_\fp)$, we wish to compare:
 \begin{itemize}
 	\item $C_{\Mat_{rd}(\F_q)}(\iota(M))$, the centralizer of $\iota(M)$ in $\Mat_{rd}(\F_q)$, and 
 	\item $\iota\left(C_{\Mat_r(\F_\fp)}(M)\right)$, the image under $\iota$ of the centralizer of $M$ in $\Mat_r(\F_\fp)$.  
 \end{itemize}
 
 \begin{lem}\label{lemC(F_p)}
 	The image of $\iota$ satisfies 
 	\[
 	\iota(\Mat_r(\F_\fp)) = C_{\Mat_{rd}(\F_q)}(\iota(\F_\fp)). 
 	\]
 \end{lem}
 \begin{proof}
 	View $\F_\fp^r$ as an $\F_q$-vector space of dimension $rd$. An $\F_q$-linear endomorphism of $\F_\fp^r$ lies 
 	in the image of $\iota$ if and only if it is $\F_\fp$-linear. But $\F_\fp$-linearity means precisely that the endomorphism 
 	commutes with scalar multiplication by every element of $\F_\fp$, i.e., it commutes with $\iota(\alpha)$ for all $\alpha\in \F_\fp$. 
 \end{proof}
 
 \begin{prop}\label{propCM}
 	Let $M\in \Mat_r(\F_\fp)$. The following are equivalent: 
 	\begin{itemize}
 		\item[(i)] $C_{\Mat_{rd}(\F_q)}(\iota(M))=\iota\left(C_{\Mat_r(\F_\fp)}(M)\right)$.
 		\item[(ii)]  $\iota(\F_\fp)\subseteq \F_q[\iota(M)]$. 
 	\end{itemize}
 \end{prop}
 \begin{proof} To simplify the notation, we will denote 
 	\[C(\iota(M))= C_{\Mat_{rd}(\F_q)}(\iota(M))\quad \text{and}\quad \iota(C(M))=\iota\left(C_{\Mat_r(\F_\fp)}(M)\right).
 	\] 
 	
 	If $N\in \Mat_r(\F_\fp)$ satisfies $NM=MN$, then applying $\iota$ (which is a ring homomorphism) gives $\iota(N)\iota(M)=\iota(M)\iota(N)$. 
 	Thus, the inclusion  
 	\[
 	\iota\left(C(M)\right) \subseteq C(\iota(M)) 
 	\]
 	holds unconditionally. 
 	
 	For the reverse inclusion, by Lemma \ref{lemC(F_p)}, we have 
 	\[
 	\iota\left(C(M)\right)  = 	\iota(\Mat_r(\F_\fp))  \cap C(\iota(M)) = C(\iota(\F_\fp)) \cap C(\iota(M)). 
 	\]
 	Therefore, the equality $\iota\left(C(M)\right)  = C(\iota(M))$ holds if and only if 
 	\[
 	C(\iota(M))\subseteq C(\iota(\F_\fp)). 
 	\]
 	
 	(ii)$\Rightarrow$(i): Suppose $\iota(\F_\fp)\subseteq \F_q[\iota(M)]$. If $X\in C(\iota(M))$, then $X$ commutes with every 
 	polynomial in $\iota(M)$, hence $X$ commutes with every element of $\F_q[\iota(M)]$. In particular, $X$ commutes 
 	with every element of $\iota(\F_\fp)$, so $X\in C(\iota(\F_\fp))$. 
 	
 	(i)$\Rightarrow$(ii):  Suppose $C(\iota(M))\subseteq C(\iota(\F_\fp))$. Taking centralizers reverses inclusions, so 
 	\[
 	C(C(\iota(\F_\fp)))\subseteq C(C(\iota(M))). 
 	\]
 	On the other hand, by the Double Centralizer Theorem, 
 	\begin{align*}
 	C(C(\iota(\F_\fp))) &= \iota(\F_\fp), \\ 
 	C(C(\iota(M))) &= \F_q[\iota(M)].  
 	\end{align*}
 	Therefore, $\iota(\F_\fp) \subseteq \F_q[\iota(M)]$. 
 \end{proof}
 
 We now give a more explicit characteriszation of condition (ii) in Proposition \ref{propCM}. 

\begin{prop}\label{propCM2}
	Let $M\in \Mat_r(\F_\fp)$. The following are equivalent:
	\begin{itemize}
		\item[(ii)]  $\iota(\F_\fp)\subseteq \F_q[\iota(M)]$. 
		\item[(iii)] There exists an eigenvalue $\la$ of $M$ (in $\oF_q$) such that $\F_q(\la)\supseteq \F_\fp$. 
	\end{itemize}
	Equivalently, (ii) fails if and only if every eigenvalue of $M$ lies in a proper subfield of $\F_\fp$ over $\F_q$. 
\end{prop}
\begin{proof}
	Let $m(x)\in \F_q[x]$ denote the minimal polynomial of $\iota(M)$ over $\F_q$, and let $m(x)=p_1(x)\cdots p_s(x)$ 
	be its factorization into distinct irreducible factors over $\F_q$, with $d_i=\deg(p_i)$. Then 
	\[
	\F_q[\iota(M)]\cong \F_q[x]/(m(x)) \cong \prod_{i=1}^s \F_{q^{d_i}}. 
	\]
	Since $\F_\fp\cong \F_{q^d}$, an embedding $\F_\fp\hookrightarrow \F_q[\iota(M)]$ exists if and only if $\F_{q^d}$ 
	embeds into some factor $\F_{q^{d_i}}$, which occurs if and only if $d\mid d_i$ for some $i$.  Thus, condition (ii) holds if and only if 
	some eigenvalue $\la$ of $\iota(M)$ satisfies $\F_\fp\subseteq \F_q(\la)$. 
	
	Finally, the eigenvalues of $\iota(M)$ are exactly the $\F_q$-conjugates of the eigenvalues of $M$. Since $\F_q$-conjugates 
	have the same minimal polynomial over $\F_q$, condition (iii) holds for an eigenvalue of $\iota(M)$ if and only if it holds 
	for an eigenvalue of $M$. 
\end{proof}

The same argument yields the following generalization.

\begin{prop}\label{propCM3}
	Let $M_1, \dots, M_k\in \Mat_r(\F_\fp)$. Then 
	\[
	C_{\Mat_{rd}(\F_q)}(\iota(M_1), \dots, \iota(M_k)) = \iota\left(C_{\Mat_r(\F_\fp)}(M_1, \dots, M_k)\right)
	\]
	if and only if $\iota(\F_\fp)\subseteq \F_q[\iota(M_1),\dots, \iota(M_k)]$. 
\end{prop}

Now returning to the message space $\cM$ in \eqref{eqcM}, we note that $\pi=\tau^n\in \cM$ once $rd>\ell$.  
We are assuming that $m_\phi(x)$ is irreducible modulo $\fp$, so the eigenvalues of $\pi$ acting on $\phi[\fp]$ 
are not in $\F_\fp$. Applying Propositions \ref{propCM} and \ref{propCM2}, we conclude that $\iota(\F_\fp)\subseteq \F_q[\iota(\pi)]$. 
Thus, by Proposition \ref{propCM3}, 
\[
C_{\End_{\F_\fp}(\phi[\fp])}(\cM) = C_{\End_{\F_q}(\phi[\fp])}(\cM).  
\]

\bibliographystyle{amsalpha}
\bibliography{Bibliography.bib}

\end{document}